\documentclass[10pt,twoside,a4]{amsart}
\usepackage{amssymb,amsmath,amsfonts,amsthm,graphicx,amscd}
\usepackage{enumerate,mathrsfs,longtable,bm,float,xcolor,tikz,tikz-cd}
\usepackage[dvips]{epsfig}
\usepackage{amsmath,amsfonts,amssymb,amsthm,bm,float,makecell,booktabs,MnSymbol}

\usepackage{enumerate}
\usepackage{amscd}

\usetikzlibrary{positioning}
\usepackage[all,cmtip,line]{xy}

\DeclareMathAlphabet{\mathpzc}{OT1}{pzc}{m}{it}

\numberwithin{equation}{section}

\theoremstyle{plain}
\newtheorem{lem}{Lemma}[section]

\newtheorem{thm}[lem]{Theorem}
\newtheorem{prop}[lem]{Proposition}

\newtheorem{cor}[lem]{Corollary}
\theoremstyle{definition}
\newtheorem{exa}[lem]{Example}
\newtheorem{rem}[lem]{Remark}
\newtheorem{defn}[lem]{Definition}
\newtheorem{definition}[lem]{Definition}

\begin{document}
	
	\baselineskip 13truept
	
	\title{On $S$-Noetherian Lattices}
	
	\author{Sachin Sarode*,  Chetan Patil** and Vinayak Joshi***}
	\address{\rm *Department of Mathematics, Shri Muktanand College\\ Gangapur, Dist. Chh. Sambhajinagar - 431 109, India.} \email{sarodemaths@gmail.com}

	\address{\rm **School of Technology Management and Engineering, SVKM NMIMS Global University,
		Dhule-424 001, India.}
	\email{patilcs19@gmail.com}
	\address{\rm ***Department of Mathematics, Savitribai Phule Pune University, Pune-411 007, India.}
	\email{vvjoshi@unipune.ac.in \\
		vinayakjoshi111@yahoo.com }

	\subjclass[2020]{Primary 06F10, 13A15, 13C05,  Secondary 06A11}
	
	\maketitle

	\begin{abstract} 
 In this paper, we define and study $S$-Noetherian lattices as a natural generalization of Noetherian rings. We prove that a ring $R$ is $S$-Noetherian if and only if its ideal lattice, $Id(R)$, is $S_L$-Noetherian. Furthermore, we establish a Cohen–Kaplansky type theorem for $S$-Noetherian lattices, showing that $L$ is $S$-Noetherian if and only if every $S$-prime element of $L$ is $S$-compact. Finally, we introduce the concept of $S$-primary elements—a generalization of primary elements in multiplicative lattices—and demonstrate the existence and uniqueness of $S$-primary decomposition in $S$-Noetherian lattices.		
	\end{abstract}
	
	\vskip 5truept
	
	\noindent\textbf{Keywords:} $S$-prime element,  $S$-Noetherian lattice, $S$-primary element, multiplicative lattice.
	\vskip 5truept
	

	\baselineskip 14truept 
	\section{Introduction}\label{intro}

The study of Noetherian rings remains a cornerstone of commutative algebra, primarily due to  the existence of primary decomposition. In recent years, significant research has focused on generalizing these classical notions to various $S$-variants. A particularly fruitful direction has been the introduction of the $S$-Noetherian property. Originally introduced by Ahmed and Sana \cite{HH} to characterize rings that satisfy a weakened version of the Noetherian condition relative to a multiplicatively closed set $S$, this concept has proven essential for understanding rings that fail the classical $ACC$ but remain manageable under $S$-localized conditions. 

In this paper, we extend this theory to  multiplicative lattices, providing a complete abstraction of $S$-Noetherian rings. Multiplicative lattices serve as an ideal framework for this unification, introduced by Ward and Dilworth \cite{WD}, as they allow for the study of ideal-theoretic properties independent of the underlying element-wise ring structure.  One of our central results, an analogue of Cohen-Kaplansky type theorem,  is a characterization theorem which states that an $r$-lattice $L$ is $S$-Noetherian if and only if every prime element is $S$-compact, which is further shown to be equivalent to $L$ satisfying the $S$-stationary property. In this context, the $S$-stationary property serves as the natural lattice-theoretic $S$-variant of the $ACC$ in rings, while $S$-compact elements is an  abstraction of $S$-finitely generated ideals. 

Massaoud \cite{M} and Visweswaran \cite{V} independently introduced the concept of $S$-primary ideals in commutative rings as a generalization of classical primary ideals. Subsequently, T. Singh, A. Ansari, and S. Kumar \cite{SAK} established the existence of $S$-primary decomposition for $S$-Noetherian rings. 

In the present work, we extend these results to the lattice-theoretic setting. We introduce the concept of $S$-primary elements in multiplicative lattices and establish both the existence and uniqueness of $S$-primary decomposition within an $S$-Noetherian lattice. Finally, we bridge the gap between ring-theoretic results and our lattice-theoretic abstractions. We establish a rigorous correspondence: if $R$ is a commutative ring with identity and $S$ is a multiplicatively closed subset of $R$, then an $S$-primary ideal of $R$ corresponds precisely to an $S_L$-primary element of the ideal lattice $L = Id(R)$, where $S_L$ is the induced multiplicatively closed subset of $L$. This correspondence ensures that our results for $S$-Noetherian lattices directly recover, unify, and extend existing theorems in commutative algebra. 
	
	We begin with the definitions and terminologies.
	
		\begin{defn}
		A \textit{multiplicative lattice} is a complete lattice $L$ together with a binary operation (multiplication) ``$\cdot$" satisfying:
		\begin{enumerate}
			\item $(L,\;\cdot)$ is commutative and associative,
			\item multiplication distributes over arbitrary joins,
			\item $1$ is the multiplicative identity.
		\end{enumerate}
	\end{defn}

	If $L$ is a multiplicative lattice and $a, b \in L$, then $a\cdot b \leq a \wedge b$. This property follows from the fact that the multiplication distributes over join and 1 is the multiplicative identity. 
	
It is well known that the ideal lattice of a commutative ring with unity is a multiplicative lattice; however, every multiplicative lattice is not an ideal lattice of some commutative ring with unity. For instance, consider the multiplicative lattice $L$ described in Example 2.4, \cite{JS}. Since L is non-modular, it cannot be the ideal lattice of any ring, because the ideal lattice of a ring is necessarily modular.

	\begin{defn}[Sarode, Joshi and Patil \cite{SJP}]\label{def:vlattice}
		A complete lattice $L$ is said to be a \textit{$V$-lattice} if there exists a commutative, associative binary operation ``$\cdot$" on $L$ such that:
		\begin{enumerate}
		\item $a \leq b \implies a \cdot c \leq b \cdot c$, for  $a, b, c\in L$.
			\item $a \cdot b \leq a \wedge b$, for  $a, b\in L$.
			\item $a \cdot 1 = a$ for every $a\in L$.
		\end{enumerate}
	\end{defn}
	
	\begin{rem} \label{1.7.}
		Every multiplicative lattice is a $V$-lattice, but not conversely.
	\end{rem}

		\begin{defn}
			A nonempty subset $S$ of $L_{*}$ in a $V$-lattice $L$ is \textit{multiplicatively closed} if $s_{1} \cdot s_{2} \in S$ whenever $s_{1}, s_{2} \in S$. Here    $L_{*}$ denotes the set of all compact elements of a complete lattice $ L $. By a \textit{compact} element $c$ of a complete lattice $ L $, we mean 
			if $ c \leq \bigvee_{\alpha} a_{\alpha} $, $\alpha \in \Lambda$
			($\Lambda$ is an indexed set) implies $ c \leq \bigvee _{i=1}^{n}
			a_{\alpha_{i}} $, where $ n \in \mathbb{Z}^{+} $. A lattice $L$ is \textit{compactly generated} if every
			element of $L$ is a join of compact elements of $L$.  
				\end{defn}
			\begin{defn} [ \cite{AAJ, A, SJ, WD}] A multiplicative lattice $L$ is \textit{$1$-compact} if $1$ is a compact element of $L$. A multiplicative lattice $L$ is a {\it $c$-lattice} if $L$ is 1-compact, compactly generated multiplicative lattice in which the product of two compact elements is compact. \end{defn}

		 \textbf{Throughout this paper, $S$ is a multiplicatively closed subset of a $V$-lattice $L$ such that $1 \in S$ and $0 \notin S$.}

		\begin{defn}[Sarode, Joshi and Patil \cite{SJP}]
		Let $S$ be a multiplicatively closed subset of a $V$-lattice $L$. A proper element $p$ of $L$ such that $t \not\leqq p$ for all $t \in S$. Then $p$ is an { \it $S$-prime} element of $L$ if there exists an element $s \in S$ such that for all $a, b  \in L$ with $a \cdot b \leq p$, implies $s \cdot a \leq p$ or $s \cdot b \leq p$. 
			
		If we take $S=\{1\}$, then we get the classical definition of prime elements. That is, 	an element $p \neq 1$ of a $V$-lattice $L$  is a \textit{prime} element, if $a\cdot b \leq p$ implies $a \leq p$ or $b \leq p$ for all $a, ~b \in L$.  
		
		 Equivalently, in a $c$-lattice $L$, an element $p \neq 1$  is a \textit{prime} element, if $a\cdot b \leq p$ implies $a \leq p$ or $b \leq p$ for  $a, ~b \in L_{\star}$, the set of all compact elements.

		The set of all $S$-prime elements of $L$ is denoted by $Spec_s(L)$. If $S = \{1\}$, then $Spec_s(L)$ is denoted by $Spec(L)$, the set of all prime elements of $L$.
	\end{defn}
		



	Following result is due to Hamed and Malek \cite{HM}.
		
	\begin{thm}[Hamed and Malek \cite{HM}] \label{1.20.}
	Let $R$ be a commutative ring with identity, $S $ be a multiplicative subset of $R$ and $P$ be an ideal of $R$ disjoint with $S$. Then $P$ is $S$-prime if and only if there exists $s \in S$, such that for all $I, J$ two ideals of $R$, if $IJ \subseteq P$, then $sI \subseteq P$ or $sJ \subseteq P$.
	\end{thm}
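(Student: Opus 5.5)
We take the ring-theoretic definition of $S$-prime ideals from \cite{HM}: an ideal $P$ with $P\cap S=\emptyset$ is $S$-prime if there is $s\in S$ such that for all $a,b\in R$, $ab\in P$ implies $sa\in P$ or $sb\in P$. We prove the two implications separately, fixing the same $s$ in both.

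\textbf{Ideal condition $\Rightarrow$ $S$-prime.} Suppose $s\in S$ works for ideals, and let $ab\in P$. Put $I=Ra$ and $J=Rb$. Then $IJ=Rab\subseteq P$, so $sI\subseteq P$ or $sJ\subseteq P$. Hence $sa\in P$ or $sb\in P$.

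\textbf{$S$-prime $\Rightarrow$ ideal condition.} Suppose $s$ witnesses that $P$ is $S$-prime, and let $IJ\subseteq P$ with $sI\not\subseteq P$. We must show $sJ\subseteq P$. Pick $a\in I$ with $sa\notin P$. For each $b\in J$ we have $ab\in P$, so $sa\in P$ or $sb\in P$. The first is excluded, so $sb\in P$. Since $b\in J$ was arbitrary, $sJ\subseteq P$.

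\textbf{Main point to watch.} The $S$-element must be uniform over all pairs of ideals. In the argument above this holds automatically, because the element $s$ is fixed once at the start. If instead $s$ had to depend on the chosen elements, the ideal version would fail. So the main care needed is to use the uniform-$s$ form of the definition from \cite{HM}.
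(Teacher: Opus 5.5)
Your proof is correct and complete. For the ideal-to-element direction, the principal ideals $Ra, Rb$ suffice, using $(Ra)(Rb)=Rab$ and $1\in R$. For the converse, fixing one $a\in I$ with $sa\notin P$ forces $sb\in P$ for every $b\in J$ with the same uniform $s$. The paper gives no proof of its own (it simply cites Hamed and Malek), and your argument is the standard one for this result.

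Your closing remark is also right: with $s$ allowed to depend on $a,b$, the element condition only says that $\{x\in R \mid tx\in P \text{ for some } t\in S\}$ is prime. From that, $I$ lying inside this set does not yield a single $s$ with $sI\subseteq P$.
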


	\begin{thm}[Sarode, Joshi and Patil \cite{SJP}]
	Let $R$ be a commutative ring with identity and $S$ a multiplicative subset of $R$. Let $L=Id(R)$ denote the multiplicative lattice of all ideals of $R$, and define 
	$S_L=\{(a)\in L \mid a\in S\}.$ Then an ideal $P$ of $R$ is an $S$-prime ideal of $R$ if and only if $P$ (viewed as an element of $L$) is an $S_L$-prime element of $L$.
	\end{thm}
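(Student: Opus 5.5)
We prove the two implications separately, with Theorem~\ref{1.20.} (Hamed and Malek) as the bridge. It reformulates $S$-primeness of an ideal entirely in terms of products of ideals, which is exactly the language of the multiplicative lattice $L=Id(R)$. Two preliminary observations come first.

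First, $S_L$ is a legitimate multiplicatively closed subset of $L$. Each principal ideal $(a)$ is finitely generated, hence compact in $Id(R)$. Moreover $(a)(b)=(ab)$, so $S_L$ is closed under products. Since $1\in S$ we get $R=(1)\in S_L$, and $0\notin S$ gives $(0)\notin S_L$ in the usual situation where $S$ avoids $0$.

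Second, $P\cap S=\emptyset$ holds if and only if $(t)\not\leq P$ for every $(t)\in S_L$. This is because $(t)\subseteq P$ exactly when $t\in P$. In particular, $P$ is proper in $R$ if and only if it is proper in $L$, since $1\in S$. So the standing hypothesis in the definition of an $S$-prime ideal matches the hypothesis in the definition of an $S_L$-prime element.

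The key identity for comparing the two conditions is that for $s\in R$ and an ideal $I$, the lattice product satisfies $(s)\cdot I=sI$. Indeed, $(s)I$ is generated by the elements $rs\,x$ with $r\in R$ and $x\in I$, and these already lie in $sI$. Conversely, $sI\subseteq(s)I$. Hence $(s)\cdot I\leq P$ if and only if $sI\subseteq P$.

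For $(\Rightarrow)$, assume $P$ is an $S$-prime ideal. Theorem~\ref{1.20.} supplies $s\in S$ such that for all ideals $I,J$ with $IJ\subseteq P$ we have $sI\subseteq P$ or $sJ\subseteq P$. Since $I\cdot J=IJ$ in $L$, the key identity shows that the element $(s)\in S_L$ witnesses that $P$ is $S_L$-prime.

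For $(\Leftarrow)$, assume $P$ is $S_L$-prime with witness $(s)\in S_L$, where $s\in S$. By the definition of $S_L$ we may choose such an $s$, although it is not unique, since $(s)=(s')$ for associates; any choice works. Reversing the key identity, for all ideals $I,J$ with $IJ\subseteq P$ we get $sI\subseteq P$ or $sJ\subseteq P$. Then Theorem~\ref{1.20.} gives that $P$ is $S$-prime.

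The only real subtlety is the bookkeeping in the second observation and in choosing the witness: a fixed $s\in S$ must exist, not an $s$ depending on $I$ and $J$. This is exactly what Theorem~\ref{1.20.} provides, and the lattice definition demands the same. Everything else is the routine identity $(s)I=sI$.
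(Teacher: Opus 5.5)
Your proof is correct and follows essentially the paper's route. The paper only cites this result from \cite{SJP}, but it places the Hamed--Malek ideal-wise characterization (Theorem~\ref{1.20.}) immediately before it, and its proof of the $S$-primary analogue (Theorem~\ref{thm:S-primary-lattice-converse}) is your argument exactly: it translates $S\cap P=\varnothing$ into $t\not\leq P$ for all $t\in S_L$, and $sI\subseteq P$ into $(s)I\leq P$ via $(s)\cdot I=sI$, which you verify explicitly and the paper uses tacitly.
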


	Thus the lattice-theoretic notion of $S$-prime elements in $L$  extends the ring-theoretic notion of $S$-prime ideals in $R$.

		\begin{defn} [ \cite{D, SJ, WD}]
			Let $a,b $ be any elements of    a multiplicative lattice $L$. Then $(a:b) =
			\bigvee \{x \;|\;  x\cdot b \leq a\}$. Note that $x \cdot b \leq a
			\Leftrightarrow x \leq (a:b)$.  Let $L$ be a $c$-lattice and $a\in L$. The \textit{radical} of $a$ is denoted by $\sqrt a$ is defined as $\sqrt a =
			\bigvee \{x\in L_* \;|\;  x^n \leq a ~ \textrm{for some }~ n\in \mathbb{Z}^+\}$. \end{defn}
		Using the fact that the compact elements are precisely the finitely generated ideals.  We have:
		
		\begin{lem}\label{lem:radical-equals-ring}
			Let $R$ be a commutative ring with $1$, and let $L=Id(R)$ be the multiplicative lattice of ideals of $R$. For $A\in L$,  the lattice-theoretic radical
			$	\sqrt{A}	=\bigvee\{\,X\in L_* \mid X^n \leq A \text{ for some } n\in\mathbb{Z}^+ \}$ coincides with $\sqrt{A} = \{\, r\in R \mid r^n \in A \text{ for some } n\in\mathbb{Z}^+ \}$,  i.e., the lattice-theoretic radical coincides with the usual radical ideal of $A$ in $R$.
		\end{lem}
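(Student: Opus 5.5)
We prove the two inclusions between the lattice-theoretic radical, which we denote $\bigvee\mathcal{X}$ where $\mathcal{X}=\{X\in L_*\mid X^n\le A \text{ for some } n\}$, and the ring-theoretic radical $\sqrt{A}=\{r\in R\mid r^n\in A \text{ for some } n\}$. Two standard facts are used throughout. In $L=Id(R)$ the join is the ideal sum and the order is inclusion. The compact elements of $L$ are exactly the finitely generated ideals. We also use that the ring-theoretic radical $\sqrt{A}$ is an ideal of $R$, which is classical via the binomial theorem.

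For the inclusion $\sqrt{A}\subseteq\bigvee\mathcal{X}$, take $r\in R$ with $r^n\in A$. The principal ideal $X=(r)$ is finitely generated, hence compact, and $X^n=(r^n)\subseteq A$. Thus $X\in\mathcal{X}$, so $r\in X\subseteq\bigvee\mathcal{X}$.

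For the reverse inclusion, it suffices to show that every $X\in\mathcal{X}$ satisfies $X\subseteq\sqrt{A}$. Since $\sqrt{A}$ is an ideal, the sum of all such $X$ is then contained in $\sqrt{A}$. Let $X=(x_1,\dots,x_k)$ with $X^n\subseteq A$. Any $x\in X$ satisfies $x^n\in X^n$, because $X^n$ is the ideal generated by all products of $n$ elements of $X$. Hence $x^n\in A$ and $x\in\sqrt{A}$. This direction does not even need finite generation of $X$.

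The only point requiring care is the identification of $L_*$ with the finitely generated ideals. A finitely generated ideal contained in a sum of ideals has each of its generators in a finite subsum, so it is compact. Conversely, a compact ideal $I$ equals $\sum_{a\in I}(a)$, so compactness puts $I$ inside a finite subsum, which makes $I$ finitely generated. The paper takes this fact as known, so it is the one step to cite rather than reprove. The rest is routine.
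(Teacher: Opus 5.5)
Your proof is correct and is essentially the paper's argument. The paper only says the lemma follows from the fact that compact elements of $Id(R)$ are the finitely generated ideals, and leaves the two inclusions implicit; you have written them out, and you rightly note that only the compactness of principal ideals is actually used.
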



\section{S-Noetherian Lattices}

Several authors, including Ward and Dilworth \cite{WD}, have studied \textit{multiplicative lattices} in connection with the ideal theory of rings. To generalize the study  of Noetherian rings to a broader order-theoretic framework, Jayaram\cite{JG} studied the concept of a \textit{Noetherian  lattice}. It is well known that the ideal lattice $Id(R)$ of a commutative ring $R$ forms a compactly generated, modular lattice in which the greatest element $1$ is compact. In fact, the notion of a Noetherian lattice is a true generalization of that of a Noetherian ring, as evidenced by the equivalence
$Id(R) $ is a Noetherian lattice  if and only if $R$  is a Noetherian ring.
This correspondence establishes a strong structural bridge between ring-theoretic and lattice-theoretic finiteness conditions.

In this section, we introduce an \textit{$S$-version of Noetherian lattices}, extending the classical framework by incorporating a multiplicatively closed subset $S$ of a $V$-lattice $L$. Motivated by the notion of  \linebreak $S$-Noetherian rings in commutative algebra, we define an \textit{$S$-Noetherian multiplicative lattice} and an analogue of the ascending chain condition (ACC) in terms of the \textit{$S$-stationary property}. Thereby providing a flexible generalization  within the lattice context.

The study of $S$-Noetherian  lattices therefore lies at the intersection of \textit{order theory} and the \textit{ideal theory of commutative rings}, serving as a unifying framework to generalize classical algebraic concepts. This perspective offers an elegant order-theoretic setting to investigate extensions of Noetherian properties, finiteness conditions, and chain conditions beyond the algebraic confines of ring theory.

\begin{defn}[D. D. Anderson \cite{A}]
	Let $a, b, m$ be elements of a multiplicative lattice $L$. 
	\begin{enumerate}
		\item An element $m$ is said to be\textit{ meet principal} if $a \wedge mb = m((a:m) \wedge b)$ for all $a,b \in L$.
		\item An element $m$ is said to be \textit{join principal} if $a \vee (b :m) = (am \vee b): m$ for all $a,b \in L$.
		\item An element $m$ is said to be  \textit{principal} if $m$ is both meet principal and join principal.
	\end{enumerate}
	
\end{defn}

\begin{defn} [D. D. Anderson \cite{A}]
	A multiplicaive lattice $L$ is said to be an \textit{$r$-lattice}, if  $L$ is a modular, principally generated, and compactly generated lattice with $1$ as a compact element of $L$. 
\end{defn}

The following definition of $S$-Noetherian ring is due to Anderson and Dumitrescu \cite{AT}. 

\begin{defn} [D. D. Aderson and Tiberiu Dumitrescu \cite{AT}]
	Let $S$ be a multiplicative subset of a commutative ring $R$ with 1. An ideal $I$ of $R$ is said to be \textit{$S$-finite}, if there exists some finitely generated ideal $J$ of $R$ and some $s \in S$ such that $sI \subseteq J \subseteq I$. A commutative ring $R$ with 1 is said to be \textit{$S$-Noetherian} if each ideal of $R$ is $S$-finite.  
\end{defn}

Now, analogously we define  $S$-Noetherian lattice as follows.

\begin{defn}
	Let $S$ be a multiplicative subset of a multiplicative lattice $L$. An element $a$ of $L$ is said to be \textit{$S$-compact}, if there exists some compact element $b$ in $L$ and some $s \in S$ such that $s\cdot a \leq b \leq a$. A $r$-lattice $L$ is said to be an \textit{$S$-Noetherian lattice} if each element of $L$ is $S$-compact.  
\end{defn}

In the following result, we connect the $S$-Noetherian rings with $S_L$-Noetherian lattices.

\begin{thm}\label{thm:SL-Noeth}
	Let $R$ be a commutative ring with $1$ and let $S\subseteq R$ be a multiplicatively closed set.
	Set $L:=Id(R)$ and $S_L:=\{\, (s)\in L \mid s\in S \,\}$.
	Then the following are equivalent:
	\begin{enumerate}
		\item $R$ is $S$-Noetherian (i.e., every ideal $I$ is $S$-finite).
		\item $L=Id(R)$ is $S_L$-Noetherian as a multiplicative lattice (i.e., every element $I\in L$ is $S_L$-compact).
	\end{enumerate}
\end{thm}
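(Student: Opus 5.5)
The plan is a direct translation of the definitions through the dictionary between $R$ and $L=Id(R)$. In $Id(R)$ the order is inclusion, the join is the ideal sum, and the product is the ideal product. The compact elements are exactly the finitely generated ideals; this standard fact was already used before Lemma \ref{lem:radical-equals-ring}. For $s\in R$ and an ideal $I$, the lattice product satisfies $(s)\cdot I=(s)I=sI$, since $(s)I$ is generated by the products $rs\,x$ and these all lie in $sI=\{sx\mid x\in I\}$, which is itself an ideal. I would first record these three facts. I would also check that $S_L$ is a legitimate multiplicatively closed subset of $L_*$: each $(s)$ is principal, hence compact, and $(s_1)(s_2)=(s_1s_2)\in S_L$. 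The standing assumptions $1\in S_L$ and $0\notin S_L$ correspond to $1\in S$ and $0\notin S$.

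For (1)$\Rightarrow$(2), let $I\in L$. Since $R$ is $S$-Noetherian, there are a finitely generated ideal $J$ and some $s\in S$ with $sI\subseteq J\subseteq I$. Translating, $(s)\cdot I=sI\le J\le I$, where $J$ is compact in $L$ and $(s)\in S_L$. Hence $I$ is $S_L$-compact.

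For (2)$\Rightarrow$(1), let $I$ be an ideal of $R$. By hypothesis there are an element $(s)\in S_L$ with $s\in S$ and a compact element $J\in L$ with $(s)\cdot I\le J\le I$. Compactness of $J$ in $Id(R)$ means $J$ is finitely generated. To see this, write $J=\sum_{x\in J}(x)$; compactness gives $J\le (x_1)+\dots+(x_n)$, so $J=(x_1,\dots,x_n)$. Then $sI=(s)I\subseteq J\subseteq I$, so $I$ is $S$-finite. One caveat is that a given element of $S_L$ may equal $(s)$ for several generators $s$, but any choice of such an $s\in S$ works.

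The only real subtlety is the definitional framework. The paper defines $S$-Noetherian lattices only for $r$-lattices, so for (2) to be meaningful I would note that $Id(R)$ is an $r$-lattice. It is modular, compactly generated by finitely generated ideals, and $1=R=(1)$ is compact. It is also principally generated, because every ideal is the join of principal ideals $(x)$, and the principal ideals of a commutative ring are principal elements in Anderson's sense. This last fact is the main point needing care. I would cite Anderson \cite{A} for it rather than verify the meet- and join-principal identities by hand. Everything else is routine unwinding of definitions.
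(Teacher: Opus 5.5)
Your proposal is correct and follows the paper's proof essentially verbatim. Like the paper, you note that $Id(R)$ is an $r$-lattice and translate $sI\subseteq J\subseteq I$ into $(s)\cdot I\le J\le I$ in both directions, using that the compact elements of $Id(R)$ are exactly the finitely generated ideals. Your additional checks ($(s)I=sI$, $S_L\subseteq L_*$ being multiplicatively closed, and principal ideals being principal elements) just make explicit details the paper leaves implicit.
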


\begin{proof}
	Note that $Id(R)$ is an $r$-lattice.
	
	$(1)\Rightarrow(2)$:
	Let $I\in L$. As $R$ is $S$-Noetherian, there exist $s\in S$ and a finitely generated ideal $J\subseteq I$ with $sI\subseteq J$.
	In $L$, $(s)\cdot I  \leq J \leq I$, and $J$ is compact. Hence $I$ is $S_L$-compact. As $I$ was arbitrary, $L$ is $S_L$-Noetherian.
	
	$(2)\Rightarrow(1)$:
	Let $I$ be an ideal of $R$. Since $L$ is  $S_L$-Noetherian, there exists $(s)\in S_L$ and a compact $b\in L$ with $(s)\cdot I\leq b\leq I$.
	It is well known that the compact elements in $Id(R)$ are precisely the  finitely generated ideals of $R$, we get $sI\subseteq b\subseteq I$ with $b$ as a finitely generated ideal. Thus $I$ is $S$-finite. Since $I$ was arbitrary, $R$ is $S$-Noetherian.
\end{proof}
\begin{rem}\label{remark}
	Let $L$ be $r$-lattice. Then every principal element of $L$ is compact, and moreover the product of two compact elements is compact in $L$. For more details; see \cite{A}. 
\end{rem}

\begin{lem} \label{5.51.}
	Let $S$ be a multiplicatively closed subset of a $r$-lattice $L$. Then the following are true.
	\begin{enumerate}
		\item If $j$ is a principal element and $i$ is an $S$-compact element of $L$, then $i\cdot j$ is $S$-compact.
		\item If $k$ is a compact element and $i$ is an $S$-compact element of $L$, then $k \vee i$ is $S$-compact.
	\end{enumerate}
\end{lem}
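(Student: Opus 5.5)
Both parts follow directly from the definition of $S$-compactness, using monotonicity of multiplication, distributivity of multiplication over joins, and Remark \ref{remark}. In each part we start from $s\in S$ and a compact $b$ with $s\cdot i\le b\le i$, which exist because $i$ is $S$-compact.

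For (1), we keep the same $s$ and take $b\cdot j$ as the new compact witness. Since $b\le i$, monotonicity gives $b\cdot j\le i\cdot j$. Using commutativity and associativity together with $s\cdot i\le b$, we get
\[
s\cdot(i\cdot j)=(s\cdot i)\cdot j\le b\cdot j .
\]
It remains to see that $b\cdot j$ is compact. By Remark \ref{remark}, the principal element $j$ is compact, and a product of two compact elements is compact. Hence $b\cdot j$ is compact and $s\cdot (i\cdot j)\le b\cdot j\le i\cdot j$, so $i\cdot j$ is $S$-compact.

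For (2), we again keep the same $s$ and take $k\vee b$ as the witness. It is compact, since a finite join of compact elements is compact: if $k\vee b\le\bigvee a_\alpha$, then $k$ and $b$ each lie below a finite subjoin, and so their join lies below the union of the two finite subjoins. From $b\le i$ we get $k\vee b\le k\vee i$. By distributivity of multiplication over joins,
\[
s\cdot(k\vee i)=(s\cdot k)\vee(s\cdot i).
\]
Here $s\cdot k\le s\wedge k\le k$ and $s\cdot i\le b$, so $s\cdot(k\vee i)\le k\vee b$. Thus $s\cdot(k\vee i)\le k\vee b\le k\vee i$, and $k\vee i$ is $S$-compact.

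There is no real obstacle. The only point needing care is in (1): we must know that $b\cdot j$ is compact, and this is exactly what Remark \ref{remark} provides (principal implies compact, and compact times compact is compact in an $r$-lattice).
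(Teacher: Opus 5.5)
Your proof is correct and is essentially the paper's own argument. In both parts you keep the same $s\in S$ and use $b\cdot j$ (compact by Remark~\ref{remark}) and $k\vee b$ as the compact witnesses, exactly as the paper does. Your explicit check that $k\vee b$ is compact fills in a step the paper leaves implicit.
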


\begin{proof}
	\begin{enumerate}
		\item Since $i$ is an $S$-compact element of $L$, there exist a compact element $b$ and some $s \in S$ such that $s \cdot i \leq b \leq i$. Therefore we have, $s\cdot i \cdot j \leq b\cdot j \leq i\cdot j$. As $j$ is a principal element and $b$ is a compact element of $L$, then by Remark \ref{remark}, $j$ is a compact element of $L$. Moreover $b\cdot j$ is a compact element of $L$.  Therefore 	$i\cdot j$ is an $S$-compact element of $L$.	
		
		\item Since $i$ is an $S$-compact element of $L$, there exist a compact element $b$ and some $s \in S$ such that $s \cdot i \leq b \leq i$. Therefore we have, $(k \vee s i) \leq ( k \vee b) \leq (k \vee i)$. This gives  $s\cdot (k \vee i) \leq  (k \vee s \cdot i) \leq (k \vee b) \leq (k \vee i)$.  Thus $k \vee i$ is an $S$-compact element of $L$.\end{enumerate}\end{proof}

\begin{definition}[Jayaram\cite{JG}]
	A multiplicative lattice $L$ is a Noetherian  lattice, if $L$ is a	$r$-lattice with ACC. Equivalently, $L$ is Noetherian if and only if every element is compact.
\end{definition}

As established earlier, the ideal lattice $Id(R)$ of a Noetherian (or $S$-Noetherian) ring $R$ posses Noetherian (or $S$-Noetherian) property. This correlation naturally invites an investigation into the specific distinction between $S$-Noetherian and classical Noetherian lattices. In this context, a foundational example provided in \cite[Example 2.5]{SAK} serves the purpose. Specifically, consider the following example.

\begin{exa}[\cite{SAK}]Let $L = \mathrm{Id}(R)$ denote the ideal lattice of the polynomial ring $R = F[x_1, x_2, \dots, x_n, \dots]$ over a field $F$. It is well-known that the lattice $L$ is not Noetherian, as it admits the strictly ascending infinite chain of ideals $(x_1) \subsetneqq (x_1, x_2) \subsetneqq (x_1, x_2, x_3) \subsetneqq \dots $ However, $R$ is an $S$-Noetherian ring for the multiplicative set $S = R \setminus \{0\}$. Consequently, by applying Theorem \ref{thm:SL-Noeth}, it follows that the lattice $L$ is $S_L$-Noetherian, where $S_L = \{ (s) \mid s \in S \}$.\end{exa}

Ahmed and Sana \cite{HH} has defined $S$-stationary for sequence of submodules of a module. Analogously, we define $S$-stationary for sequence of elements of a $V$-lattice $L$.

\begin{defn}
	Let $S$ be a multiplicatively closed subset of a $V$-lattice $L$. An ascending chain of elements $i_1 \leq i_2 \leq i_3 \leq \cdots \leq i_m  \leq \cdots $ is called \textit{$S$-stationary} if there exist $s \in S$ and $ n \in \mathbb{Z}^{+}$ such that $s  i_m \leq i_n$ for all $m \geq n$.

Let $S$ be a multiplicatively closed subset of a $V$-lattice $L$, then  $L$ is said to be satisfies \textit{$S$-stationary property}, if every ascending chain of elements is $S$-stationary.

	Let $S$ be a multiplicatively closed subset of a $c$-lattice $L$ and let $T\subseteq L$. An element $m$ of $T$ is called \textit{$S$-maximal element of $T$} if there exists an element $s\in S$ such that for each $a\in T$, if $m\leq a$, then $sa\leq m$.	
\end{defn}

\begin{thm} \label{5.11a.}
	Let $L$ be a $r$-lattice. Then the following statements are equivalent:
	
	\begin{enumerate}
		\item $L$ is $S$-Noetherian lattice.
		
		\item Every element of $L$ is $S$-compact. 
		
		\item $L$  satisfies $S$-stationary property. 
		\item\label{S-MC} Every non-empty subset of $L$ has $S$-maximal element.
	\end{enumerate}
\end{thm}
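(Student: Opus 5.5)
The equivalence $(1)\Leftrightarrow(2)$ is just the definition of an $S$-Noetherian lattice, since $L$ is assumed to be an $r$-lattice. So the work lies in proving $(2)\Rightarrow(3)\Rightarrow(4)\Rightarrow(2)$, mimicking the ring-theoretic arguments of Ahmed and Sana \cite{HH}. Throughout we use that $L$ is compactly generated with $1$ compact. We also use that the product of compact elements is compact (Remark \ref{remark}), together with monotonicity and distributivity of multiplication over joins.

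For $(2)\Rightarrow(3)$, let $i_1\le i_2\le\cdots$ be an ascending chain and put $a=\bigvee_m i_m$. By (2) there are $s\in S$ and a compact $b$ with $s a\le b\le a=\bigvee_m i_m$. Compactness of $b$ gives finitely many indices with $b\le i_{m_1}\vee\cdots\vee i_{m_k}=i_n$, where $n=\max m_j$, using that the chain is ascending. Hence $s i_m\le s a\le b\le i_n$ for all $m\ge n$, so the chain is $S$-stationary.

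For $(3)\Rightarrow(4)$, argue by contradiction. Suppose a nonempty $T\subseteq L$ has no $S$-maximal element. Then for each $m\in T$ and each $s\in S$ there is $a\in T$ with $m\le a$ and $sa\not\le m$. Pick $i_1\in T$ and build the chain inductively, using the dependent choice. The subtle point is that a single $s$ must work for the stationarity, but the chain construction must defeat every $s$. The standard trick, following \cite{HH}, is to fix one $s\in S$ and note that $S$-maximality fails for that particular $s$ at every element. So we construct $i_1\le i_2\le\cdots$ in $T$ with $s i_{k+1}\not\le i_k$ for the given $s$. However, stationarity only supplies some $s'$ and $n$ with $s' i_m\le i_n$ for $m\ge n$, and $s'$ need not equal $s$.

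To fix this, I will run the construction so that at step $k$ the witness defeats a cumulative product. Since $S$ may be uncountable, a single sequence cannot defeat all of $S$. Instead I will argue as follows. By (3), every chain in $T$ is $S$-stationary. Assuming no $S$-maximal element, for each $m\in T$ and $t\in S$ choose $a\in T$ with $m\le a$ and $t a\not\le m$. Build the chain with $t=s'$ chosen adaptively? That is circular. So I expect to follow the actual proof in \cite{HH}, which uses the $S$-Noetherian condition (2) rather than (3) alone, via a Zorn-type argument on the elements $\bigvee$ of chains. Alternatively I would prove $(3)\Rightarrow(2)$ directly and $(2)\Rightarrow(4)$ separately.

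The direct route $(3)\Rightarrow(2)$ is as follows. Suppose $a$ is not $S$-compact. Choose compact elements $c_1\le a$, and inductively $c_{k+1}\le a$ compact with $c_{k+1}\not\le \bigvee_{j\le k}c_j$, strengthened to $s_k a\not\le b_k$ where $b_k=c_1\vee\cdots\vee c_k$. Since $a$ is not $S$-compact, $s a\not\le b_k$ for every $s\in S$. Pick the compact generators of $a$ witnessing $s_0 a\not\le b_k$ for the specific $s$ we care about. The chain $b_1\le b_2\le\cdots$ is then $S$-stationary with some $s$ and $n$. Setting $b=b_n$, we have $s b_m\le b_n$ for all $m$. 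The obstacle is to ensure that $\bigvee_m b_m=a$, so that $s a\le b_n$, giving a contradiction. Countability of $a$'s generators is not available, so this is the main hurdle.

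I expect to resolve it by the Zorn/maximality argument. Let $T=\{c\in L_*: c\le a\}$, which is upward directed. Apply (4) to $T$ (which requires $(3)\Rightarrow(4)$), or apply (4) directly if we prove $(2)\Rightarrow(4)$ and $(4)\Rightarrow(2)$. For $(4)\Rightarrow(2)$: an $S$-maximal $b\in T$ with element $s$ satisfies $s c\le b$ for all compact $c\le a$ with $b\le c$. Since $b\vee c\in T$ for every compact $c\le a$, joining gives $s a\le b\le a$. For $(2)\Rightarrow(4)$ and $(3)\Rightarrow(4)$ I would adapt \cite{HH} using the chain of joins, and this step is the main obstacle.
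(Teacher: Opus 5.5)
Your arguments for $(1)\Leftrightarrow(2)$, $(2)\Rightarrow(3)$ and $(4)\Rightarrow(2)$ are correct and agree with the paper. In $(4)\Rightarrow(2)$ you apply $S$-maximality to the elements $b\vee c\in T$; this is more careful than the paper, which applies $S$-maximality of $k$ directly to $x$ even though $x$ need not lie in $T$. The genuine gap is that you never establish any implication leaving $(3)$, nor $(2)\Rightarrow(4)$, so the cycle of equivalences is not closed. Your diagnosis is right. Failure of $S$-maximality at $x_1$ only gives, for each $s\in S$, some $a_s\in T$ with $x_1\le a_s$ and $sa_s\nleq x_1$, and $a_s$ depends on $s$. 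The paper's proof of $(3)\Rightarrow(4)$ silently swaps these quantifiers (``there exist an element $x_2\in T$ such that $x_1\leq x_2$ and $sx_2\nleq x_1$ for all $s\in S$''), so it does not supply the missing idea either.

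In fact the gap cannot be closed, because $(3)\Rightarrow(2)$ is false for uncountable $S$. Here is a counterexample.
\begin{itemize}
\item Setup. Let $\Gamma=\bigoplus_{\alpha\le\omega_1}\mathbb{Z}$ with unit vectors $e_\alpha$, ordered so that the sign of $\gamma\neq 0$ is the sign of its coordinate at $\max\mathrm{supp}(\gamma)$. Let $H=\{\gamma:\gamma(\omega_1)=0\}$, let $V$ be a valuation domain with value group $\Gamma$, and put $S=\{x\in V: v(x)\in H\}$ and $P=V\setminus S$. Work in the $r$-lattice $Id(V)$ with $S_L$.
\item $(2)$ and $(4)$ fail. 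Given $s\in S$ and $0\neq a\in P$, pick $\beta<\omega_1$ above every index $<\omega_1$ in the supports of $v(s)$ and $v(a)$, and $x$ with $v(x)=e_{\omega_1}-e_\beta$. Then $x\in P$ and $v(sx)<v(a)$, so $sP\nleq aV$. Since the compact elements are the principal ideals, $P$ is not $S_L$-compact.
\item $(3)$ holds. Let $I_1\le I_2\le\cdots$ be an ascending chain. If some $I_n$ meets $S$, stationarity is immediate. Otherwise the least $\omega_1$-coordinate $k$ of values in $I_m$ is eventually constant. Then $C_m=\{h\in H: ke_{\omega_1}+h\in v(I_m)\}$ is an increasing chain of upper sets of $H$.
\item Case $\bigcup_m C_m\neq H$. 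Then $\bigcup_m C_m$ lies above some $h_0$. Take $c\in C_n$ for $n$ large and $s$ with $v(s)=c-h_0$; this gives $sI_m\le I_n$ for all $m$.
\item Case $\bigcup_m C_m=H$. Suppose no $C_m=H$, and choose $h_m$ below $C_m$. A single $-e_\beta$ lies below all $h_m$, hence in no $C_m$, which is a contradiction. So some $C_n=H$, and the chain is constant from $n$ on.
\end{itemize}

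What survives is $(1)\Leftrightarrow(2)\Leftrightarrow(4)\Rightarrow(3)$.
\begin{itemize}
\item For $(2)\Rightarrow(4)$: $(2)$ forces ACC on $S$-saturated elements. Choose $m\in T$ with $m_S$ maximal among $\{a_S: a\in T\}$. Combining $s'm_S\le b\le m_S$ ($b$ compact) with $s''b\le m$ shows that $s's''$ witnesses $S$-maximality of $m$.
\item $(3)\Rightarrow(4)$ does hold when $S=\{t_1,t_2,\dots\}$ is countable: at step $k$, choose $x_{k+1}$ to defeat $u_k=t_1\cdots t_k$ rather than a single $s$.
\end{itemize}
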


\begin{proof}
	$(1) \Leftrightarrow  (2)$ follows from the definition of an $S$-Noetherian lattice.


	$(2) \Rightarrow (3)$ Suppose that every element of $L$ is $S$-compact. We will show that $L$ satisfies $S$-stationary property. Let $b_1 \leq b_2 \leq b_3 \leq \cdots$ be any ascending chain of elements in $L$. Let $j = \bigvee_{\alpha} b_{\alpha}$, $\alpha \in \Lambda$ ($\Lambda$ is an index set). Since $j$ is $S$-compact, we have $s \cdot j \leq k \leq j$ for some compact element $k$ and some $s \in S$. Therefore $s \cdot (b_1 \vee b_2 \vee b_3 \vee \cdots) \leq k \leq (b_1 \vee b_2 \vee b_3 \vee \cdots)$. Since $k$ is compact and $k \leq (b_1 \vee b_2 \vee b_3 \vee \cdots)$, we have $k \leq b_1 \vee b_2 \vee b_3 \vee \cdots \vee b_n =b_n$, as $b_1 \leq b_2 \leq b_3 \leq \cdots \leq b_n$. So $s \cdot (b_1 \vee b_2 \vee b_3 \vee \cdots) \leq b_n \leq (b_1 \vee b_2 \vee b_3 \vee \cdots)$. This gives $s \cdot b_1 \vee s \cdot b_2 \vee s \cdot b_3 \vee \cdots \leq  b_n$. Thus,  $s b_m \leq b_n$ for all $m \geq n$. Hence $L$  satisfies $S$-stationary property.

$(3) \Rightarrow (4)$: Let $T$ be a non-empty subset of $L$. Let $x_1\in T$. If $x_1$ is not $S$-maximal in $T$, then there exist an element $x_2\in T$ such that $x_1\leq x_2$ and $sx_2\nleq x_1$ for all $s\in S$. Again, if $x_2$ is not $S$-maximal in $T$, then there exist an element $x_3\in T$ such that $x_2\leq x_3$ and $sx_3\nleq x_2$ for all $s\in S$. Continuing in this way, we get an ascending chain $x_1\leq x_2\leq x_3\leq \dots$ of elements in $T$ such that $sx_m\nleq x_n$ for all $m\geq n$, $m,n\in \mathbb{Z_+}$ and for all $s\in S$, a contradiction to $(3)$. Therefore, $T$ has an $S$-maximal element. Thus, every non-empty subset of $L$ has an $S$-maximal element.
	
	$(4) \Rightarrow  (2)$: Assume that every non-empty subset of $L$ has an $S$-maximal element. Let $x\in L$.
	
	Since $L$ is an $r$-lattice, it is compactly generated and hence there exists a family $\{a_\alpha\}_{\alpha\in\Lambda}\subseteq L_*$
	of compact elements such that
	$x=\bigvee_{\alpha\in\Lambda} a_\alpha$. Let
	$T=\left\{\bigvee_{i=1}^n a_{\alpha_i}\mid
	n\in\mathbb{N},\ \alpha_1,\dots,\alpha_n\in\Lambda\right\}$.
	Then $T$ is a non-empty subset of $L$, and every element of $T$ is compact,
	since finite join of compact elements is compact.
	Moreover, $\bigvee T=x$.
	
	By the assumption, $T$ has an $S$-maximal element; say, $k\in T$ be such an element.
	Then $k$ is compact and $k\leq x$. Since $k$ is $S$-maximal in $T$ and $k\leq x$, there exists $s\in S$ such that $sx\leq k$.
	 Therefore, $sx\leq k\leq x$, with $k$ compact.
	Hence $x$ is an $S$-compact element of $L$. Since $x\in L$ was arbitrary, every element of $L$ is $S$-compact.
	\end{proof}

In sequel, we obtain the S-version of Cohen's result. For details of Cohen's result; see Kaplansky \cite[Theorem 8]{kap}.  

\begin{rem}\label{princ-prime}
	\begin{enumerate}
	\item Observe that if $L$ is an $r$-lattice, then to establish that an element 
	$p \in L$ is prime, it is enough to verify that
	$x \cdot y \leq p \;\implies\; x \leq p \ \text{or}\ y \leq p
	$
	holds whenever $x, y$ are principal elements of $L$.
	
	\item Note that if $a$ is not an $S$-compact element of $L$, then $t\nleq a$ for all $t\in S$. For, if $t_1\leq a$ for some $t_1\in S$, then $t_1a\leq t_1\leq a$, where $t_1\in S\subseteq L_{\star}$, the set of compact elements. Therefore, $a$ is  $S$-compact, a contradiction. 
	\end{enumerate}
\end{rem}

\begin{thm}[Cohen--Kaplansky $S$-version]
	Let $S$ be a multiplicatively closed subset of an $r$-lattice $L$. Then $L$ is an $S$-Noetherian lattice if and only if  every $S$-prime element of $L$  is  $S$-compact. 
\end{thm}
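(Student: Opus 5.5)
The forward direction is immediate from Theorem \ref{5.11a.}: if $L$ is $S$-Noetherian, every element, in particular every $S$-prime element, is $S$-compact. For the converse I will argue by contradiction, mirroring the classical Cohen argument in the form used by Hamed and Malek for rings. Suppose $L$ is not $S$-Noetherian, and let $\mathcal{F}$ be the set of elements of $L$ that are not $S$-compact; $\mathcal{F}\neq\emptyset$. By Remark \ref{princ-prime}(2), every $a\in\mathcal{F}$ satisfies $t\nleq a$ for all $t\in S$.

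The first step is a Zorn's lemma argument: $\mathcal{F}$ is inductive. Take a chain $\{a_\lambda\}$ in $\mathcal{F}$ and put $a=\bigvee a_\lambda$. If $a$ were $S$-compact, we would have $sa\le k\le a$ with $k$ compact. Compactness of $k$ and the chain property give $k\le a_\mu$ for some $\mu$, and then $sa_\mu\le sa\le k\le a_\mu$. This would make $a_\mu$ $S$-compact, which is a contradiction. So $\mathcal{F}$ has a maximal element $p$. Note that $p\neq 1$, since $1$ is compact, and $t\nleq p$ for all $t\in S$.

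The main step is to show that $p$ is $S$-prime, indeed prime. This contradicts the hypothesis, because $p$ is not $S$-compact. By Remark \ref{princ-prime}(1) it suffices to take principal $x,y$ with $xy\le p$, $x\nleq p$, $y\nleq p$, and derive a contradiction. Since $x$ is principal, it is compact (Remark \ref{remark}), so $p\vee x>p$ lies outside $\mathcal{F}$. Hence there are $s_1\in S$ and a compact $k$ with $s_1(p\vee x)\le k\le p\vee x$. Working in the $r$-lattice (compactly generated by principals, modular), I will write $k\le p\vee x$ and replace $k$ by a compact $k'=c\vee x$ with $c\le p$ compact, keeping $s_1(p\vee x)\le c\vee x\le p\vee x$. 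To get this, decompose $k$ as a finite join of principals below $p\vee x$, and use compactness of $k$ together with $p=\bigvee$ of compacts to get $k\le c\vee x$. Next, $(p:x)\ge p\vee y>p$, so $(p:x)$ is $S$-compact: $s_2(p:x)\le d\le(p:x)$ with $d$ compact. Then $xd$ is compact (product of compacts is compact in an $r$-lattice) and $xd\le p$.

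Set $s=s_1s_2$. I claim $sp\le c\vee xd\le p$, which would make $p$ $S$-compact, a contradiction. The upper bound is clear. For the lower bound, use $s_1p\le c\vee x$ and modularity (since $c\le p$):
\[
s_1p\le p\wedge(c\vee x)=c\vee(p\wedge x).
\]
Because $x$ is meet principal, $p\wedge x=x(p:x)$. Hence
\[
s_2s_1p\le s_2c\vee x\,s_2(p:x)\le c\vee xd.
\]
The delicate point, and the expected main obstacle, is justifying the replacement of $k$ by $c\vee x$ with $c\le p$ compact. This needs the modular/principal structure: $k\le p\vee x$, and I would use $p\vee x=\bigvee\{e\vee x: e\le p \text{ compact}\}$, a directed join, so that compact $k$ lies below one term. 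The rest is a routine verification of the meet-principal identity.
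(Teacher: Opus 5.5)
Your argument is correct and has the same skeleton as the paper's proof. Both use Zorn's lemma on the non-$S$-compact elements to get a maximal $p$, take principal $x,y$ with $xy\le p$, and use the $S$-compactness of $p\vee x$ and $(p:x)$ together with $p\wedge x=x(p:x)$.

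The difference is the last step, and it matters. The paper appeals to the transposition isomorphism $[p\wedge x,p]\cong[x,p\vee x]$ to assert an \emph{equality} $p=y\vee(p\wedge x)$ with $y$ compact, and then applies Lemma \ref{5.51.}(2). But $S$-compactness of $p\vee x$ only gives $s_1(p\vee x)\le c\vee x$ with $c\le p$ compact. Modularity then yields only the \emph{inequality} $s_1p\le p\wedge(c\vee x)=c\vee(p\wedge x)\le p$. An exact compact complement would follow only if $p\vee x$ were compact, not merely $S$-compact. Your route supplies exactly the missing step: refine $k$ to $c\vee x$, derive that inequality, and absorb $s_2$ through $s_2(p:x)\le d$. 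This gives $s_1s_2\,p\le c\vee xd\le p$ with $c\vee xd$ compact, which is the lattice form of the Anderson--Dumitrescu argument for rings.

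Two small remarks. First, the refinement step you flag as delicate needs only compact generation, via the directed join $p\vee x=\bigvee\{e\vee x: e\le p\ \text{compact}\}$; the decomposition into principals is unnecessary, and modularity enters only afterwards. Second, you run Zorn's lemma over arbitrary chains. The paper checks only countable increasing sequences, which on its own does not license the appeal to Zorn's lemma, so your version also closes that gap.
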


\begin{proof}If $L$ is an $S$-Noetherian lattice, then every element is $S$-compact and hence every prime element is $S$-compact.

	Conversely, assume that every $S$-prime element of $L$  is $S$-compact. Let $\beta = \{a' \in L\; |\; a'$ is not  $S$-compact$\}$. If $\beta = \emptyset$, then all elements of $L$ are $S$-compact and by Theorem \ref{5.11a.}, $L$ is an $S$-Noetherian lattice.
	
	Now, assume that $\beta \neq \emptyset$. Let $a_{\alpha} \in \beta$, $\alpha \in \Lambda$ ($\Lambda$ is an index set). Let $ a_1 \leq a_2 \leq a_3 \leq \cdots$ be increasing sequence of elements from $\beta$. Let $a = \bigvee_{\alpha} a_{\alpha}$, $\alpha \in \Lambda$ ($\Lambda$ is an index set). We show that $a$ is upper bound of $a_1 \leq a_2 \leq a_3 \leq \cdots$ and $a \in \beta$.
	
	First, we claim that $a$ is not an $S$-compact element of $L$. Suppose on the contrary that $a$ is an $S$-compact element of $L$. Therefore there exists $s \in S$ and a compact element $b \in L$ such that $s \cdot  a \leq b \leq a$, that is $s \cdot (\bigvee_{\alpha} a_{\alpha}) \leq b \leq (\bigvee_{\alpha} a_{\alpha})$. Since $b$ is compact and  $b \leq (\bigvee_{\alpha} a_{\alpha})$, we have $b \leq a_1 \vee a_2 \vee \cdots \vee a_n$ for some $ n \in \mathbb{Z}^{+}$. As  $a_1 \leq a_2 \leq a_3 \leq \cdots$ is ascending chain of elements and $b \leq a_1 \vee a_2 \vee \cdots \vee a_n=a_n$ for some $n \in \mathbb{N}$,  we have $s \cdot (\bigvee_{\alpha} a_{\alpha})  \leq b \leq a_n$. Since $s \cdot a_n \leq s \cdot (\bigvee_{\alpha} a_{\alpha})$, we have $s \cdot  a_n \leq b\leq a_n$, a contradiction to $a_n$ is not an $S$-compact element. Therefore  $a$ is not an $S$-compact element of $L$.  Hence,  $a \in \beta$. Moreover, $a$ is an upper bound of  $a_1 \leq a_2 \leq a_3 \leq \cdots$. By Zorn's Lemma, $\beta$ contains a maximal element, say $p$. Therefore, $p$ is not an $S$-compact element. This gives $t\nleq p$ for all $t\in S$.
	If $p$ is prime, then $p$ is also $S$-prime for the multiplicatively closed subset $S$ of $L$. Hence, by hypothesis, $p$ is an $S$-compact element, contradicting $p\in\beta$ and in this case we are through. 
	
	If $p$ is not a prime element, then, by Remark \ref{princ-prime}, there exist principal elements $x, y$ in $L$ such that $xy \leq p$ with $x \not\leqq p$ and $y \not\leqq p$. Let $A = x \vee p$ and $B = (p : x)$. Clearly, $p<A,B$, as $x\leq A$, but $x\nleq p$ and $y\leq B$, but $y\nleq p$. Therefore, $A,B\notin \beta$. Hence, $A$ and $B$ are $S$-compact elements of $L$. Since $x$ is principal, $p \wedge x = x \cdot (p: x) = x \cdot  B$. By Lemma \ref{5.51.}, $x \cdot B$ is $S$-compact and hence $p \wedge x$ is $S$-compact. Since $L$ is modular, by  isomorphism theorem for modular lattices, we have 
	$[p\wedge x, p] \cong [x, p\vee x]$ and hence $p = y \vee (p \wedge x)$, where $y$ is a compact element. By Lemma \ref{5.51.}, $p$ is $S$-compact, contradicting $p\in\beta$.
	Thus, $\beta=\varnothing$, i.e., every element of $L$ is $S$-compact, and by Theorem~\ref{5.11a.}, $L$ is $S$-Noetherian.
\end{proof}

\section{$S$-primary decomposition in $S$-Noetherian lattice}	
In this section, we have given the existence and uniqueness of  $S$-primary decomposition in an $S$-Noetherian lattice.  This extends the results of $S$-Noetherian rings to $S$-Noetherian lattices.

\begin{defn}[\cite{M,V}]
	Let $R$ be a commutative ring with identity and $S$ a multiplicatively closed subset of $R$.
	A proper ideal $Q$ of $R$ is called an \emph{$S$-primary ideal} of $R$ if
	$S\cap Q=\varnothing$ and there exists $s\in S$ such that for all $a,b\in R$,
	$ab\in Q$ implies that $sa\in Q$ or $sb\in \sqrt{Q}$.
\end{defn}

Analogously, we define an $S$-primary element in a multiplicative lattices as follows.

\begin{definition}  Let $S$ be a multiplicatively closed subset of a $c$-lattice $L$. A proper element $q$ of $L$ such that $t\nleq q$ for all $t\in S$ is called an \textit{$S$-primary } if there exists an element $s\in S$ such that for all $c,d \in L$ with $cd \leq q$  implies $sc \leq q$ or $sd \leq \sqrt q$.

	If $q$ is an $S$-primary element of a $c$-lattice $L$ and  $\sqrt{q}=p$ is $S$-prime element, then  $q$ is called $S$-$p$-primary element of $L$. 
\end{definition}

\begin{rem}
	\begin{enumerate}
		
		\item Let $L$ be a $c$-lattice and $S=\{1\}$, then the primary elements of $L$ and the $S$-primary elements of $L$ coincide.
		\item  If $q$ is a primary element of a $c$-lattice $L$ and $S$ is a multiplicatively closed subset of $L$ such that $t\nleq q$ for all $t\in S$, then $q$ is a $S$-primary element of $L$.
		
	\end{enumerate}
\end{rem}

\begin{exa}	
	\begin{enumerate}
		\item Consider a lattice $L=Id(\mathbb{Z}_{12})$, the ideal lattice of the ring $\mathbb{Z}_{12} $.  Observe that  $L$ is a c-lattice with the multiplication as the multiplication  of ideals. Let $S = \{(1), ~(3) \}$. Then
		\begin{enumerate}
			\item  $(4)$ is an $S$-primary element of $L$ but not an $S$-prime element of $L$.
			
			\item $(6)$ is an $S$-primary element of $L$ but not a primary element of $L$.
		\end{enumerate}
		
		\item Let $L=Id(\mathbb{Z}_{12})$. Let $S = \{(1), ~(4) \}$. Then $(4)$ is a primary element of $L$ but not $S$-primary.
		
	\end{enumerate}	
\end{exa}

The following result gives the characterization of $S$-primary ideals.
\begin{thm}[\cite{M}, Theorem 2.11]\label{2.11.}
	Let $R$ be a commutative ring with identity, $S$ a multiplicative subset of $R$ and $P$ an ideal of $R$ disjoint from $S$. Then $P$ is $S$-primary if and only if there exists $s \in S$, such that for all $I, J$ two ideals of $R$, if $IJ \subseteq P$, then $sI \subseteq P$ or $sJ \subseteq \sqrt{P}$.
\end{thm}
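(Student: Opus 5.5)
The reverse implication is immediate. Suppose $s \in S$ has the property that $IJ \subseteq P$ forces $sI \subseteq P$ or $sJ \subseteq \sqrt{P}$ for all ideals $I,J$. Take $a,b \in R$ with $ab \in P$ and set $I=(a)$, $J=(b)$. Then $IJ=(ab)\subseteq P$. Hence either $sa \in s(a) \subseteq P$ or $sb \in s(b)\subseteq \sqrt{P}$, which is the element-wise definition of $P$ being $S$-primary. We also need $P$ proper, but this holds because $P\cap S=\varnothing$ and $1\in S$.

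For the forward implication, let $s\in S$ witness the element-wise $S$-primary condition. We do not try to keep $s$ itself; instead we aim to show that $s^2$, or possibly a suitable power of $s$, works for ideals. Suppose $IJ\subseteq P$ and $sI\not\subseteq P$. Then some $a\in I$ has $sa\notin P$, and we want $s^kJ\subseteq\sqrt P$ for a uniform $k$. For each $b\in J$ we have $ab\in P$, so $sa\in P$ or $sb\in\sqrt P$. The first is excluded by the choice of $a$, so $sb\in \sqrt P$. Therefore $sJ\subseteq \sqrt P$.

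So the conclusion holds with the same $s$: either $sI\subseteq P$, or some $a\in I$ has $sa\notin P$, and then every $b\in J$ satisfies $sb\in\sqrt P$. The only point needing care is the order of the quantifiers. The element $s$ must be fixed independently of $I$ and $J$, and it is, since it comes from the element-wise definition. The element $a$ is then fixed once for the whole of $J$, which is legitimate because $IJ\subseteq P$ gives $ab\in P$ for every $b\in J$ with this single $a$. So there is essentially no obstacle: the proof is a direct passage between generators and ideals, and the main work is bookkeeping the quantifiers.

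Afterwards I would note that the same argument transfers to $L=Id(R)$ with $S_L$, exactly as in the $S$-prime correspondence of Sarode, Joshi and Patil. That transfer uses that principal ideals $(s)$ multiply as $(s)I=sI$, and that the lattice radical agrees with the ring radical by Lemma \ref{lem:radical-equals-ring}.
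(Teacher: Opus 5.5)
Your proof is correct. The witness $s$ from the element-wise definition works unchanged for ideals: if $sI\not\subseteq P$, pick $a\in I$ with $sa\notin P$, and then $ab\in IJ\subseteq P$ forces $sb\in\sqrt{P}$ for every $b\in J$; the converse via $I=(a)$, $J=(b)$ is also fine.

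The paper gives no proof of its own, since it simply quotes \cite{M}, and your argument is the standard direct one. Do delete the opening aside about needing $s^2$ or some $s^k$: you never use it, and it contradicts what you then prove.
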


\begin{thm}\label{thm:S-primary-lattice-converse}
	Let $R$ be a commutative ring with identity and $S$ a multiplicatively closed subset of $R$. 
	Let $L=Id(R)$ denote the multiplicative lattice of all ideals of $R$, and define
	$S_L=\{(a)\in L \mid a\in S\}$.
	Then an ideal $Q$ of $R$ is an $S$-primary ideal of $R$ if and only if $Q$ (viewed as an element of $L$)
	is an $S_L$-primary element of $L$.
\end{thm}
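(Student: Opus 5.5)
The plan is to reduce everything to the ideal-theoretic characterization in Theorem \ref{2.11.}, in the same way that the $S$-prime correspondence was obtained from Theorem \ref{1.20.}. The dictionary is as follows. In $L=Id(R)$ the multiplication is the product of ideals and $\leq$ is inclusion. By Lemma \ref{lem:radical-equals-ring}, the lattice radical $\sqrt{Q}$ is the usual radical. For $s\in S$ and an ideal $I$ we have $(s)\cdot I=sI$, so $(s)I\leq Q$ if and only if $sI\subseteq Q$, and $(s)J\leq\sqrt Q$ if and only if $sJ\subseteq \sqrt Q$.

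First I will match the side conditions. The element $Q$ is proper in $L$ exactly when $Q\neq R$. For every $a\in S$ we need $(a)\nleq Q$, which holds if and only if $a\notin Q$, that is, $S\cap Q=\varnothing$. I will also note that $S_L\subseteq L_*$, since principal ideals are finitely generated, so $S_L$ is a legitimate multiplicatively closed subset: $(a)(b)=(ab)$. If $0\in S$ the statement is degenerate on both sides, because no ideal is disjoint from $S$ and, if one insists on the standing convention, $(0)\in S_L$.

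For ($\Rightarrow$), let $Q$ be $S$-primary. Theorem \ref{2.11.} gives $s\in S$ such that for all ideals $I,J$ with $IJ\subseteq Q$ we have $sI\subseteq Q$ or $sJ\subseteq\sqrt Q$. Take the element $(s)\in S_L$. If $c,d\in L$ satisfy $cd\leq Q$, then applying this to $I=c$ and $J=d$ yields $(s)c\leq Q$ or $(s)d\leq\sqrt Q$, using the dictionary above. Hence $Q$ is $S_L$-primary.

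For ($\Leftarrow$), suppose $(s)\in S_L$ witnesses that $Q$ is $S_L$-primary. Let $a,b\in R$ with $ab\in Q$. Then $(a)(b)=(ab)\leq Q$, so $(s)(a)\leq Q$ or $(s)(b)\leq\sqrt Q$. This means $sa\in Q$ or $sb\in\sqrt Q$, so $Q$ is $S$-primary directly from the definition, without even needing Theorem \ref{2.11.}. Alternatively, the ideal version holds for all $I,J$ and Theorem \ref{2.11.} applies.

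The main point needing care is the identification of the lattice radical with the ring radical, which is supplied by Lemma \ref{lem:radical-equals-ring}. One should also note that $Id(R)$ is a $c$-lattice, so that the definition of an $S$-primary element applies. This holds because $Id(R)$ is compactly generated by finitely generated ideals, $R=(1)$ is compact, and a product of finitely generated ideals is finitely generated. Beyond these checks the argument is a direct translation.
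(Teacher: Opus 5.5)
Your proposal is correct and follows essentially the paper's route: you translate via $(s)I=sI$, identify the two radicals by Lemma~\ref{lem:radical-equals-ring}, and use Theorem~\ref{2.11.} for the forward direction. The differences are minor: for the converse you test on principal ideals $(a),(b)$ to reach the elementwise definition directly, rather than invoking Theorem~\ref{2.11.} again; and you fix the single witness $(s)$ before quantifying over $c,d$, which is what the definition of an $S_L$-primary element requires and which the paper's wording of the forward direction leaves implicit.
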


\begin{proof}
	Suppose that $Q$ is an $S$-primary ideal of $R$.  
	Then $S\cap Q=\varnothing$. This shows that  $t\not\leqq Q$ for all $t\in S_L$.
	Further, for all $I,J\in L$, $IJ\leq Q$ implies that there exists $s\in S$ such that
	$(s)I\leq Q$ or $(s)J\leq \sqrt{Q}$.
	Hence $Q$ is an $S_L$-primary element of $L$.
	
	Conversely, assume that $Q$ is an $S_L$-primary element of $L$.  
	By the definition, $t\not\leqq Q$ for all $t\in S_L$.  
	If there exists $a\in S\cap Q$, then $(a)\in S_L$ and $(a)\leq Q$, a contradiction.
	Thus $S\cap Q=\varnothing$.
	
	Now let $I,J$ be ideals of $R$ such that $IJ\subseteq Q$.
	Equivalently, $IJ\leq Q$ in $L$.
	Since $Q$ is $S_L$-primary, there exists $(s)\in S_L$, for some $s\in S$, such that
	$(s)I\leq Q$ or $(s)J\leq \sqrt{Q}$.
	That is, $sI\subseteq Q$ or $sJ\subseteq \sqrt{Q}$, by Lemma \ref{lem:radical-equals-ring}.
	Hence $Q$ is an $S$-primary ideal of $R$.
\end{proof}

\begin{definition}
	Let $L$ be a multiplicative lattice and $S \subseteq L$ be a multiplicatively closed subset. Let $a \in L$ be an element such that its $S$-saturation $a_S < 1$. We say that $a$ admits an \textit{$S$-primary decomposition } if it can be represented as a finite meet of $S$-primary elements
	$a = \bigwedge_{i=1}^{n} q_i$ where each $q_i$ is an $S$-$p_i$-primary element of $L$. Such a decomposition is said to be \textit{minimal }  if the following two conditions are satisfied:
	\begin{itemize}
		\item The $S$-saturations of the radicals are distinct for all $i, j \in \{1, 2, \dots, n\}$:
		$(p_i)_S \neq (p_j)_S \;\; \text{for } i \neq j$.
		\item   For each $i \in \{1, 2, \dots, n\}$:
		$\left( \bigwedge_{j \neq i} q_j \right)_S \not\leq (q_i)_S$ (Equivalently, for each $i$, there exists an element $x_i \in \bigwedge_{j \neq i} q_j$ such that $s x_i \not\leq q_i$ for all $s \in S$.)
	\end{itemize} 
\end{definition}

\begin{prop}\label{prop:finite-intersection-S-primary}
	Finite meet of $S$-$P$-primary elements is an $S$-$P$-primary element in a $c$-lattice $L$.
\end{prop}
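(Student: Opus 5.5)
Let $q_1,\dots,q_n$ be $S$-$p$-primary elements of the $c$-lattice $L$, all with the same radical $\sqrt{q_i}=p$, where $p$ is $S$-prime. For each $i$ let $s_i\in S$ be the element from the definition of $S$-primary. Put $q=\bigwedge_{i=1}^n q_i$ and $s=s_1s_2\cdots s_n\in S$. We must show three things: $q$ is proper with $t\nleq q$ for all $t\in S$; $\sqrt q=p$; and $s$ witnesses the $S$-primary condition for $q$. By induction it would suffice to treat $n=2$, but the argument works directly for any finite $n$.

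\emph{Properness and avoidance of $S$.} If $t\le q$ for some $t\in S$, then $t\le q_1$, which contradicts $q_1$ being $S$-primary. In particular $q\neq 1$, because $1\in S$.

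\emph{The radical.} First, $\sqrt q\le \sqrt{q_i}=p$ since radical is monotone. For the reverse inequality we use that $L$ is compactly generated, so it suffices to handle a compact $x\le p$.
\begin{itemize}
\item For each $i$ we have $x\le\sqrt{q_i}$, and we would like $x^{m_i}\le q_i$ for some $m_i$. This holds because in a $c$-lattice a compact element below $\sqrt{q_i}=\bigvee\{y\in L_*: y^k\le q_i\}$ lies below a finite join $y_1\vee\cdots\vee y_r$ of such $y$.
\item A sufficiently high power of a finite join is a join of products, each of which contains some $y_j^{k_j}$ as a factor. Hence a high power of the join lies below $q_i$.
\item Taking $m=\max_i m_i$ gives $x^m\le \bigwedge_i q_i=q$, so $x\le\sqrt q$.
\end{itemize}
Hence $\sqrt q=p$. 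This radical computation is the step needing the most care, namely the standard fact that compact elements under $\sqrt a$ have a power under $a$. I would verify it via the multinomial expansion using distributivity of multiplication over finite joins.

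\emph{The primary condition.} Let $c,d\in L$ with $cd\le q$, and suppose $sd\nleq \sqrt q=p$. Since $s_id\ge sd$ (because $s\le s_i$, as $s$ is a product containing $s_i$, and $ab\le a\wedge b$), we get $s_id\nleq p=\sqrt{q_i}$ for every $i$. From $cd\le q_i$ and the $S$-primary property of $q_i$ we obtain $s_ic\le q_i$. Then $sc\le s_ic\le q_i$ for all $i$, hence $sc\le q$. So $cd\le q$ implies $sc\le q$ or $sd\le\sqrt q$.

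Finally, $\sqrt q=p$ is $S$-prime by hypothesis, so $q$ is $S$-$p$-primary.
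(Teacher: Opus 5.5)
Your proof is correct and uses the same idea as the paper: use the common radical $p$ and apply each $q_i$'s $S$-primary condition componentwise. The paper argues from the side $sy\nleq\bigwedge q_i$ and finds a failing component $q_k$, whereas you argue from the side $sd\nleq p$ and conclude $s_ic\le q_i$ for every $i$. Your version is more careful than the paper's write-up in two respects: you exhibit the single uniform witness $s=s_1s_2\cdots s_n$ that the definition requires, and you prove $\sqrt{\bigwedge_i q_i}=\bigwedge_i\sqrt{q_i}$ via compactness instead of asserting it.
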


\begin{proof}
	Let $q_1,q_2,\dots,q_n$ be $S$-$P$-primary elements of $L$.
	Then $t\nleq q_i$ for all $t\in S$ and for each $i=1,2,\dots,n$.
	Hence $t\nleq \bigwedge_{i=1}^{n} q_i$ for all $t\in S$. Since $L$ is a $c$-lattice, we have
	$\sqrt{\bigwedge_{i=1}^{n} q_i}
	=\bigwedge_{i=1}^{n}\sqrt{q_i}
	=p$.
	
	Now let $x,y\in L$ such that
	$xy\leq \bigwedge_{i=1}^{n} q_i$
	and suppose that $sy\nleq \bigwedge_{i=1}^{n} q_i$ for all $s\in S$.
	If there exists $s\in S$ such that $sy\leq q_i$ for all $i$,
	then $sy\leq \bigwedge_{i=1}^{n} q_i$, a contradiction.
	Therefore, for each $s\in S$, there exists some index $k$ such that
	$sy\nleq q_k$.
	
	Since $xy\leq \bigwedge_{i=1}^{n} q_i$, we have $xy\leq q_k$.
	As $q_k$ is an $S$-$P$-primary element, there exists $s'\in S$ such that
	$s'x\leq \sqrt{q_k}=p=\sqrt{\bigwedge_{i=1}^{n} q_i}$.
	Thus $\bigwedge_{i=1}^{n} q_i$ is an $S$-$P$-primary element of $L$.
\end{proof}


\begin{prop}\label{} 
	Let $S$ be a multiplicatively closed subset of a $c$-lattice $L$. Let $q\in L$ such that $t\nleq q$ for all $t\in S$. If $\sqrt{q}=m$ is $S$-maximal element of $L$, then $q$ is $S$-primary element of $L$.
\end{prop}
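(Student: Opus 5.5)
The plan is to adapt the classical argument that a power of a maximal ideal is primary: an element comaximal with $\sqrt q$ is comaximal with $q$ itself. First I fix the reading of the hypothesis. The notion of $S$-maximal element defined before Theorem~\ref{5.11a.} cannot be applied to $T=L$ literally, since taking $a=1$ would force $s\le m$. So I read ``$m=\sqrt q$ is an $S$-maximal element of $L$'' as follows: $m$ is proper, $t\nleq m$ for all $t\in S$, and there is $s\in S$ such that for every $a\in L$ with $m\le a$, either $sa\le m$ or $a=1$. This $s$ will be the witness in the definition of $S$-primary. The condition $t\nleq q$ for all $t\in S$ is assumed in the statement, and $q\le \sqrt q=m<1$ gives that $q$ is proper.

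Now let $c,d\in L$ with $cd\le q$, and suppose $sd\nleq \sqrt q=m$; I aim to show $sc\le q$. Put $a=m\vee d$. Then $m\le a$, and $sa\le m$ would give $sd\le m$, a contradiction. Hence $S$-maximality forces $a=1$, i.e. $\sqrt q\vee d=1$.

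Next I pass from $\sqrt q$ to $q$, working in the $c$-lattice. Since $\sqrt q\le\sqrt{q\vee d}$ and $d\le \sqrt{q\vee d}$, we get $1=\sqrt{q\vee d}$, which is a join of compact elements $x$ with $x^k\le q\vee d$ for some $k$. As $1$ is compact, $1\le x_1\vee\cdots\vee x_r$ for finitely many such $x_i$. Expanding $(x_1\vee\cdots\vee x_r)^N$ with $N$ large, using distributivity of multiplication over joins, every product term contains some $x_i^{k_i}$. Hence $1=1^N\le q\vee d$. Then
\[
c=c\cdot 1=c(q\vee d)=cq\vee cd\le q,
\]
so certainly $sc\le c\le q$. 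This establishes the defining implication with the single uniform witness $s$, and $q$ is $S$-primary. Moreover $\sqrt q=m$, so $q$ is $S$-$m$-primary once $m$ is known to be $S$-prime, which follows by the same comaximality argument applied to $m$ itself.

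The main obstacle is the interpretation of ``$S$-maximal''. If instead the hypothesis only says that every $a>m$ either satisfies $sa\le m$ or contains some $t\in S$, then the argument yields $t\le q\vee d$ up to a power, hence $t^Nc\le q$. Here $t$ and $N$ depend on $d$, so there is no uniform witness. In that case I would try to use compactness of elements of $S$ together with the fixed $s$ to bound the exponent uniformly. Failing that, I would adopt the ``$a=1$'' convention above, which is the natural lattice analogue of $S$-maximal ideals.
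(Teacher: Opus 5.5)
Your proof is correct under your reading of ``$S$-maximal'' (the paper's definition with $T=L\setminus\{1\}$). That is also the reading under which the paper's own proof makes sense: it needs $s'm'\le m$ for a genuinely maximal element $m'\ge m$ in order to conclude $m'\vee y=1$.

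The two routes differ. The paper picks a maximal element $m'\ge m$ and gets $s'm'\le m$ from $S$-maximality. It then deduces $y\nleq m'$, hence $m'\vee y=1$. Finally it expands $(s')^kx=(s'm'\vee s'y)^kx$, using $(s'm')^k\le q$, to reach $(s')^kx\le q$.

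You instead apply $S$-maximality directly to $m\vee d$ to get $\sqrt q\vee d=1$. You lift this to $q\vee d=1$ via compactness of $1$ and the definition of the radical, and conclude $c=c(q\vee d)\le q$.

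Your version has several advantages:
\begin{enumerate}
\item It needs no auxiliary maximal element, so no Zorn argument.
\item It fixes the witness $s$ before splitting on whether $sd\le\sqrt q$. That is exactly the quantifier structure of the definition, whereas the paper only treats $y$ with $sy\nleq\sqrt q$ for every $s\in S$.
\item It gives the stronger conclusion $c\le q$.
\item It sidesteps the paper's step ``$s'm'\le\sqrt q$ implies $(s')^k(m')^k\le q$ for some $k$''. In a $c$-lattice this is only justified when $s'm'$ is compact, and it can fail. Take $q=0$ in $F[y_1,y_2,\dots]/(y_i^i : i\ge 1)$ with $S=\{1\}$: the nilradical is maximal, but no power of it is $0$.
\end{enumerate}
The paper's route, in turn, is the direct transcription of the classical elementwise ring argument.

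On your fallback: under the weaker reading (every $a\ge m$ satisfies $sa\le m$ or contains some $t\in S$), no uniform bound on the exponent can be found, because the statement is then false. Take
\begin{itemize}
\item $R=F[[x]][y_1,y_2,\dots]/(y_iy_j,\ x^iy_i : i,j\ge 1)$,
\item $S=\{x^n : n\ge 0\}$,
\item $q=0$.
\end{itemize}
The radical $\sqrt q=(y_1,y_2,\dots)$ satisfies the weaker condition with $s=1$, since every ideal strictly above it contains a power of $x$. Yet for any candidate witness $x^n$, the ideals $c=(y_{n+1})$ and $d=(x^{n+1})$ in $Id(R)$ satisfy $cd=0$, $x^nc\neq 0$ and $x^nd\nleq\sqrt q$. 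So your ``$a=1$'' convention is necessary, not merely convenient.
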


\begin{proof} 
	Let $x, y \in L$ such that  $xy\leq q$ with $sy\nleq \sqrt{q}=m$ for all $s\in S$. Let $m^{\prime}\in L$ be a maximal element such that $m\leq m^{\prime}$  and $t\nleq m^{\prime}$ for all $t\in S$. Since $m$ is $S$-maximal element of $L$, there exists $s^{\prime}\in S$ such that $s^{\prime}m^{\prime}\leq m$. Therefore $sy\nleq s^{\prime}m^{\prime}$ for all $s\in S$.  In particular,  $s^{\prime}y\nleq s^{\prime}m^{\prime}$. Also, if $y\leq m^{\prime}$, then $s^{\prime}y\leq s^{\prime}m^{\prime}$, a contradiction. Therefore $y\nleq m^{\prime}$. Hence $m^{\prime} \vee y=1$. Then $s^{\prime}=s^{\prime}(1)=s^{\prime}(m^{\prime} \vee y)=s^{\prime}m^{\prime}\vee s^{\prime}y$. Since $s^{\prime}m^{\prime}\leq m=\sqrt{q}$, there exists $k\in \mathbb{N}$ such that $(s^{\prime})^k(m^{\prime})^k\leq q$. Now, $(s^{\prime})^kx=(s^{\prime}m^{\prime}\vee s^{\prime}y)^k x=(s^{\prime})^k (m^{{\prime}k}\vee m^{{\prime}{k-1}}y\vee \dots \vee m^{\prime}y^{k-1}\vee y^k)x \leq (s^{\prime})^k(m^{{\prime}k} \vee y)x= (s^{\prime})^k(m^{\prime})^k x\vee (s^{\prime})^kyx\leq q\vee q=q $. Hence $ (s^{\prime})^kx\leq q$.\end{proof}	

Now, we relate the known  concept of $S$-irreducible ideals in a commutative ring with the $S$-irreducible elements in a multiplicative lattices.

\begin{definition} (Definition 2.2 \cite{SAK})
	Let $R$ be a commutative ring with identity and let $S$ be a multiplicatively
	closed subset of $R$.
	A proper ideal $Q$ of $R$ is called an \emph{$S$-irreducible ideal} of $R$ if
	$S\cap Q=\varnothing$ and whenever
	$s(I\cap J)\subseteq Q\subseteq I\cap J$
	for some $s\in S$ and for some ideals $I,J$ of $R$,
	there exists $s'\in S$ such that
	$ss'I\subseteq Q$ or $ss'J\subseteq Q$.
\end{definition}

\begin{definition}
	Let $S$ be a multiplicatively closed subset of a $c$-lattice $L$. An element $q$ of $L$ such that $t\nleq q$ for all $t\in S$ is called \textit{$S$-irreducible element}, if $s(a\wedge b)\leq q \leq a\wedge b$ for some $s\in S$ and for some $a,b\in L$, then there exists an element $s^{\prime}\in S$ such that either $ss^{\prime}a\leq q$ or $ss^{\prime}b\leq q$.	
\end{definition}

\begin{lem}\label{thm:S-irreducible-ideal-lattice}
	Let $R$ be a commutative ring with identity and let $S$ be a multiplicatively
	closed subset of $R$.
	Let $L=Id(R)$ be the $c$-lattice of all ideals of $R$ and
	$S_L=\{(s)\in L \mid s\in S\}$.
	Then an ideal $Q$ of $R$ is an $S$-irreducible ideal of $R$ if and only if
	$Q$ (viewed as an element of $L$) is an $S_L$-irreducible element of $L$.
\end{lem}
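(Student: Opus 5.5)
The plan is a direct translation between the two definitions, in the same style as the proofs of Theorem \ref{thm:SL-Noeth} and Theorem \ref{thm:S-primary-lattice-converse}. The dictionary is as follows: ideals of $R$ are elements of $L$; inclusion is $\leq$; intersection of ideals is the meet in $L$; and for $s\in S$ and an ideal $I$, the product $sI$ equals the ideal product $(s)I$. Everything should follow from these identifications; no deeper result is needed.

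First I will treat the disjointness condition. If $S\cap Q=\varnothing$, then no $(t)\in S_L$ satisfies $(t)\leq Q$, since $(t)\subseteq Q$ would force $t\in Q$. Conversely, if $t\nleq Q$ for all $t\in S_L$ and some $a\in S\cap Q$ existed, then $(a)\in S_L$ and $(a)\leq Q$, which is a contradiction. Properness of $Q$ is the same condition in both settings.

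For the forward direction, suppose $Q$ is $S$-irreducible. Let $(s)\in S_L$ and $I,J\in L$ with $(s)(I\wedge J)\leq Q\leq I\wedge J$. This says exactly $s(I\cap J)\subseteq Q\subseteq I\cap J$. The ring definition then gives $s'\in S$ with $ss'I\subseteq Q$ or $ss'J\subseteq Q$. Since $(s)(s')=(ss')\in S_L$ and $(ss')I=ss'I$, we get $(s)(s')I\leq Q$ or $(s)(s')J\leq Q$, as required. The converse reverses these identifications. Given $s\in S$ with $s(I\cap J)\subseteq Q\subseteq I\cap J$, we have $(s)(I\wedge J)\leq Q\leq I\wedge J$ in $L$. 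The lattice definition then supplies some $(s')\in S_L$ with $s'\in S$ such that $(s)(s')I\leq Q$ or $(s)(s')J\leq Q$, that is, $ss'I\subseteq Q$ or $ss'J\subseteq Q$.

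The only point needing care is the identity $(s)I=sI$ for a principal ideal $(s)$. Both sides are ideals, because $sI$ is closed under addition and under multiplication by $R$. The product $(s)I$ is generated by elements $(rs)i=s(ri)\in sI$, and conversely $si\in(s)I$. Together with the multiplicativity $(s)(s')=(ss')$, this is the only potential obstacle, and it is routine. Everything else is a verbatim translation.
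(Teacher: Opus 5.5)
Your proposal is correct and matches the paper's proof: both directions translate the definitions directly via $I\wedge J=I\cap J$, $(s)(s')=(ss')$ and $(s)I=sI$, and disjointness is handled exactly as in the paper. Your explicit verification that $(s)I=sI$ supplies a step the paper only asserts with ``equivalently''.
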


\begin{proof}
	Assume that $Q$ is an $S$-irreducible ideal of $R$.
	Then $S\cap Q=\varnothing$.
	Hence $(s)\nleq Q$ for all $(s)\in S_L$.
	
	Let $A,B\in L$ and suppose that
	$(s)(A\wedge B)\leq Q\leq A\wedge B$
	for some $(s)\in S_L$.
	That is,
	$s(A\cap B)\subseteq Q\subseteq A\cap B$.

	As $Q$ is an $S$-irreducible ideal, there exists $s'\in S$ such that
	$ss'A\subseteq Q$ or $ss'B\subseteq Q$.
	Equivalently,
	$(ss')A\leq Q$ or $(ss')B\leq Q$ in $L$.
	Thus $Q$ is an $S_L$-irreducible element of $L$.
	
	Conversely, assume that $Q$ is an $S_L$-irreducible element of $L$.
	By the definition, $(s)\nleq Q$ for all $(s)\in S_L$.
	If $a\in S\cap Q$, then $(a)\leq Q$, a contradiction.
	Hence $S\cap Q=\varnothing$.
	
	Let $I,J$ be ideals of $R$ such that
	$s(I\cap J)\subseteq Q\subseteq I\cap J$
	for some $s\in S$.
	Equivalently,
	$(s)(I\wedge J)\leq Q\leq I\wedge J$ in $L$.
	Since $Q$ is $S_L$-irreducible, there exists $(s')\in S_L$ such that
	$(ss')I\leq Q$ or $(ss')J\leq Q$.
	That is,
	$ss'I\subseteq Q$ or $ss'J\subseteq Q$.
	Hence $Q$ is an $S$-irreducible ideal of $R$.
\end{proof}

\begin{prop}\label{pro1} 
	Let $L$ be an $S$-Noetherian lattice. Let $q_1,q_2,\dots,q_n\in L$, and $p$ be an $S$-prime element of $L$ such that $\displaystyle \bigwedge_{i=1}^{n} q_i\leq p$. Then there exists $s\in S$ such that $sq_k\leq p$ for some $k$. In particular, if $\displaystyle p=\bigwedge_{i=1}^{n} q_i$, then there exists $s\in S$ such that $sq_k\leq p\leq q_k$ for some $k$.
\end{prop}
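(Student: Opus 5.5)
We use the product of the $q_i$ together with the $S$-prime property of $p$; the Noetherian hypothesis plays no real role. Let $s_0\in S$ be the element witnessing that $p$ is $S$-prime, so that $a b\leq p$ implies $s_0a\leq p$ or $s_0b\leq p$. Since $q_1q_2\cdots q_n\leq q_1\wedge\cdots\wedge q_n\leq p$ (because $a\cdot b\leq a\wedge b$ in a multiplicative lattice), it suffices to show that $q_1\cdots q_n\leq p$ forces $s q_k\leq p$ for some $k$ and some $s\in S$.

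We argue by induction on $n$. For $n=1$ we take $s=1\in S$. For $n\geq 2$, write $q_1\cdots q_n=(q_1\cdots q_{n-1})\cdot q_n\leq p$. The $S$-prime property gives $s_0 q_n\leq p$, in which case we are done with $k=n$ and $s=s_0$. Otherwise $s_0(q_1\cdots q_{n-1})\leq p$. In that case we rewrite this as $(s_0q_1)q_2\cdots q_{n-1}\leq p$, a product of $n-1$ elements, and apply the induction hypothesis to the elements $s_0q_1,q_2,\dots,q_{n-1}$. This yields $s'\in S$ with either $s's_0q_1\leq p$ or $s'q_k\leq p$ for some $2\leq k\leq n-1$. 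In the first case $s=s's_0\in S$ works with $k=1$, using that $S$ is multiplicatively closed and multiplication is associative. In the second case $s=s'$ works.

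The only point needing care is the bookkeeping in the induction. Absorbing the accumulated element of $S$ into one factor keeps the statement in the form ``a product of $n-1$ elements lies below $p$''. An alternative is to note directly that $s_0^{\,n-1}$ suffices, since each peeling step costs one factor of $s_0$; this gives the explicit choice $s=s_0^{\,n-1}$.

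For the ``in particular'' statement, if $p=\bigwedge_{i=1}^n q_i$ then trivially $p\leq q_k$ for every $k$, so the element $s$ and index $k$ obtained above give $sq_k\leq p\leq q_k$.
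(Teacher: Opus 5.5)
Your proof is correct and follows the paper's route: bound $\prod_{i} q_i$ by $\bigwedge_i q_i \le p$ and then apply the $S$-prime property of $p$. The paper does this in one line, applying $S$-primeness directly to the $n$-fold product. Your induction, or the uniform choice $s=s_0^{\,n-1}$, supplies the bookkeeping that one line leaves implicit.
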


\begin{proof} 
	Since $\displaystyle \prod_{i=1}^{n} q_i \leq \bigwedge_{i=1}^{n} q_i\leq p$ and $p$ is an $S$-prime element, there exists $s\in S$ such that $sq_k\leq p$ for some $k$. If $\displaystyle p=\bigwedge_{i=1}^{n} q_i$, then $sq_k\leq p\leq q_k$ for some $k$ and $s\in S$.
\end{proof}

\begin{thm}\label{irred}
	Let $L$ be an $S$-Noetherian lattice. Then every $S$-irreducible element of $L$ is $S$-primary.
\end{thm}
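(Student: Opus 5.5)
I would imitate the ring-theoretic proof of \cite{SAK}, which in turn follows Noether's classical argument via colon chains, replacing ACC by the $S$-stationary property of Theorem~\ref{5.11a.}.

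Let $q$ be $S$-irreducible and suppose $xy\leq q$. Assume $sy\nleq \sqrt q$ for all $s\in S$; the aim is to find a single $s\in S$ with $sx\leq q$. In an $r$-lattice every element is a join of principal elements, so I would first reduce to principal (hence compact) $y$. Then $y^n\nleq q$ for every $n$, because $y$ is compact and $sy\nleq\sqrt q$ with $s=1$.

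Consider the ascending chain
\[
(q:y)\leq (q:y^2)\leq (q:y^3)\leq \cdots .
\]
By Theorem~\ref{5.11a.}, it is $S$-stationary, so there are $u\in S$ and $n$ with $u(q:y^m)\leq (q:y^n)$ for all $m\geq n$. Put $a=q\vee x$ and $b=q\vee y^n$. Clearly $q\leq a\wedge b$. The key step is to show $u(a\wedge b)\leq q$.

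Let $z\leq a\wedge b$. Since $y$ is principal, hence meet principal and join principal, and $L$ is modular, $z\leq q\vee y^n$ lets me write $z\leq q\vee y^n w$ with $w=(z\vee q):y^n$ in the usual manner. Since $xy\leq q$, we have $yz\leq y(q\vee x)\leq q$, hence $y^{n+1}w\leq q$, i.e.\ $w\leq (q:y^{2n+1})$ after adjusting exponents. $S$-stationarity then gives $uw\leq (q:y^n)$, so $uy^nw\leq q$ and therefore $uz\leq q$. This manipulation in lattice form, using the modular law and the join-principal identity $(q\vee y^n w):y^n = w\vee (q:y^n)$, is where I expect the main obstacle. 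I would first try taking $z=a\wedge b$ directly and applying the meet-principal identity $b'\wedge y^n c=y^n((b':y^n)\wedge c)$ to $(a\wedge b)$ modulo $q$.

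Once $u(a\wedge b)\leq q\leq a\wedge b$ is established, $S$-irreducibility yields $s'\in S$ with either $us'(q\vee x)\leq q$ or $us'(q\vee y^n)\leq q$. The second alternative gives $us'y^n\leq q$, so $(us'y)^n\leq q$ and $us'y\leq\sqrt q$ (as $us'y$ is compact), which contradicts the assumption. Hence $us'x\leq q$.

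The remaining difficulty is uniformity of the witness. The definition of $S$-primary requires one $s$ working for all pairs, whereas $us'$ depends on $x,y$. I would handle this as in \cite{SAK}: use the $S$-maximal element characterization (Theorem~\ref{5.11a.}(\ref{S-MC})) applied to the set $\{(q:c)\}$ to fix a uniform $u$. Alternatively, I would note that $S$-irreducibility applied to the fixed pair $q\vee\sqrt{q}$-type elements produces $s'$ independent of the choice. Finally, for general (non-principal) $y$, I would write $y$ as a join of principal elements and use that $\sqrt q$ is a join-closed condition to reduce to the principal case.
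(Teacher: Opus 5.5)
Your core argument is the same as the paper's. You take an $S$-stationary colon chain, apply $S$-irreducibility to $s(c\wedge d)\le q\le c\wedge d$ with $c=q\vee x$ and $d=q\vee y^{n}$, and discard the $d$-alternative through the radical. The paper uses the chain $((a\vee q):m^{k})$ and $d=q\vee m^{k+1}$ instead of $(q:y^{k})$ and $q\vee y^{n}$; the difference is cosmetic.

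The step you flag as the main obstacle does go through. For $z=c\wedge d$, modularity ($q\le z\le q\vee y^{n}$) and meet-principality of $y^{n}$ give $z=q\vee(z\wedge y^{n})=q\vee y^{n}(z:y^{n})$. Since $yz\le y(q\vee x)\le q$, we get $(z:y^{n})\le(q:y^{n+1})$. Hence $u(z:y^{n})\le(q:y^{n})$ and $uz\le q$. Your exponent $2n+1$ is a harmless slip.

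The genuine gap is uniformity of the witness, which you correctly detect but only gesture at. The paper's proof does not address it either: its final $s=s_1\cdots s_r$ depends on the pair $(a,b)$. What is actually proved is only the pairwise statement: if $xy\le q$ with $y$ principal, then $tx\le q$ for some $t\in S$ or $ty\le\sqrt q$ for some $t\in S$.

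Drop the ``$q\vee\sqrt q$-type'' suggestion, which is not an argument, and make the first one precise as follows.
\begin{itemize}
\item Apply condition (4) of Theorem~\ref{5.11a.} to $T=\{(q:t)\mid t\in S\}$. This gives an $S$-maximal element $(q:u)$ with witness $v$.
\item Since $(q:u)\le(q:ut)$, you get $v(q:ut)\le(q:u)$, i.e.\ $(q:ut)\le(q:uv)$ for all $t\in S$.
\item Replacing $t$ by $vt$ shows that $w=uv$ satisfies $(q:wt)=(q:w)$ for all $t\in S$.
\item Now let $cd\le q$ with $wc\nleq q$. Then $t(wc)\nleq q$ for every $t\in S$, since otherwise $c\le(q:wt)=(q:w)$.
\item For every principal $y\le d$, the pairwise statement applied to $(wc)y\le q$ yields $t$ with $ty\le\sqrt q$.
\item As $ty$ is compact, $(ty)^{N}\le q$ for some $N$. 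Hence $y^{N}\le(q:t^{N})\le(q:wt^{N})=(q:w)$, so $(wy)^{N}\le wy^{N}\le q$, i.e.\ $wy\le\sqrt q$.
\item Joining over all principal $y\le d$ gives $wd\le\sqrt q$, so $w$ is a uniform witness.
\end{itemize}

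This ordering also repairs your reduction to principal $y$. Under your hypothesis ``$sy\nleq\sqrt q$ for all $s$'', join-closedness of $\{z\mid z\le\sqrt q\}$ alone does not give a single principal $y'\le y$ with the same property when $y$ is an infinite join, because each piece may have its own $s_\alpha$. It works once the witness is the fixed $w$, or after using $S$-compactness of $y$ to reduce to finitely many principal elements. The paper's claim that $b$ is a finite join of principal elements has the same defect: it holds only for compact $b$.
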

\begin{proof}
	Let $q$ be a $S$-irreducible element of $L$. Let $a, m \in L$ such that $am\leq q$ with $sa\nleq q$ for all $s\in S$ and $m$ is a meet  principal element. Then consider the chain $((a\vee q): m)\leq ((a\vee q):m^2) \leq \dots \leq ((a\vee q):m^k) \leq \dots $. Since $L$ is an $S$-Noetherian lattice, by Theorem \ref{5.11a.}, there exists $s\in S$ and $k\in \mathbb{N}$ such that $s((a\vee q):m^{k+1})\leq ((a\vee q):m^k)$. 
	
	Let $c=a\vee q$ and $d=m^{k+1}\vee q$. Then $(c\wedge d):m^{k+1}=(c: m^{k+1})\wedge (d: m^{k+1})=((a\vee q):m^{k+1})\wedge((m^{k+1}\vee q):m^{k+1})=((a\vee q):m^{k+1})\wedge 1=(a\vee q):m^{k+1}$. 
	
	Therefore for any $s\in S$, $s((c\wedge d):m^{k+1})=s((a\vee q):m^{k+1})\leq (a\vee q):m^{k}$. Since $m^k$ and $m^{k+1}$ are meet principal elements, $((c\wedge d):m^{k+1})m^{k+1}=c\wedge d\wedge m^{k+1}$ and $((a\vee q):m^{k})m^k=(a\vee q)\wedge m^k$. 
	
	Clearly,  $ d\wedge (c\wedge d)=(q\vee m^{k+1})\wedge (c\wedge d)$. Since $L$ is an $r$-lattice, $L$ is  modular and  $q \leq c \wedge d $, hence by modularity, we have $(c\wedge d)\wedge(m^{k+1}\vee q)= (m^{k+1}\wedge c\wedge d)\vee q=q\vee ((c\wedge d):m^{k+1})m^{k+1}$, as $m^{k+1}$ is meet principal. Using the fact that $ x \leq (x : y ) y $, we have  $s(c \wedge d) \leq sq\vee s((c\wedge d):m^{k+1})m^{k+1}\leq q\vee ((a\vee q):m^{k})m^{k}m=q\vee ((a\vee q)\wedge m^k)m\leq q\vee (a\vee q)m=q\vee am\vee qm\leq q\vee q \vee q =q$. Hence $s(c\wedge d)\leq q$. Thus, we have $s(c\wedge d)\leq q\leq c\wedge d$. Since $q$ is a $S$-irreducible element, there exists $s^{\prime}\in S$ such that either $ss^{\prime}c\leq q$ or $ss^{\prime}d\leq q$. Since $ss^{\prime}c=ss^{\prime}a \vee ss^{\prime}q\nleq q$, as $sa\nleq q$ for all $s \in S$, we have $ss^{\prime}d\leq q$. Then $(ss^{\prime}m)^{k+1}\leq ss^{\prime}m^{k+1}\leq ss^{\prime}(m^{k+1}\vee q)= ss^{\prime}d\leq q$. This proves $ss^{\prime}m\leq \sqrt{q}$.
	
	Now, let $a,  b \in L$ such that  $ab\leq q$ with $sa\nleq q$ for all $s\in S$. Since $L$ is a $r$-lattice, $b$ is a finite join of meet principal elements, say, $b=m_1\vee m_2 \vee \dots \vee m_r$. Then $ab=am_1\vee am_2 \vee \dots \vee am_r\leq q$. This implies $am_i\leq q$ for all $i$. Since $sa\nleq q$ for all $s\in S$, there exists $s_i\in S$ such that $s_im_i\leq \sqrt{q}$ for $i=1,2,\dots ,r$. Take $s=s_1s_2\dots s_r$. Then $sb=sm_1\vee sm_2 \vee \dots \vee sm_r\leq \sqrt{q}$. This proves that $q$ is $S$-primary element of $L$.
\end{proof}

\begin{thm}[\textbf{Existence of $S$-Primary Decomposition}]\label{Existence} 
	Let $L$ be an $S$-Noetherian lattice. Then every proper element of $L$ can be written as a finite meet of $S$-primary elements.
\end{thm}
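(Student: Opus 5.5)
The plan is to reduce the statement to Theorem~\ref{irred}. By that theorem, every $S$-irreducible element of an $S$-Noetherian lattice is $S$-primary. It therefore suffices to show that every proper element $a$ of $L$ with $t\nleq a$ for all $t\in S$ is a finite meet of $S$-irreducible elements. Elements lying above some $t\in S$ are degenerate for this notion: their $S$-saturation is $1$, so they are excluded as in the definition of $S$-primary decomposition, and I will treat them as such. The argument follows the classical Noether scheme, with Theorem~\ref{5.11a.}(\ref{S-MC}) (existence of $S$-maximal elements) in place of the maximal condition.

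Let $T$ be the set of elements $a\in L$ with $t\nleq a$ for all $t\in S$ that are \emph{not} finite meets of $S$-irreducible elements, and suppose $T\neq\varnothing$. By Theorem~\ref{5.11a.}(\ref{S-MC}), $T$ has an $S$-maximal element $m$: there is $s_0\in S$ such that $m\leq x\in T$ implies $s_0x\leq m$. Since $m\in T$, $m$ is not $S$-irreducible. Hence there exist $s\in S$ and $a,b\in L$ with $s(a\wedge b)\leq m\leq a\wedge b$, but $ss'a\nleq m$ and $ss'b\nleq m$ for every $s'\in S$. If $a$ were in $T$, then $m\leq a$ would give $s_0a\leq m$, hence $ss_0a\leq m$, which is impossible. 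The same holds for $b$. So $a,b\notin T$. Neither $a$ nor $b$ lies above an element of $S$: if $t\leq a$ with $t\in S$, then $st=st\cdot 1\ldots$ is not immediate, so I would use $ss'a\nleq m$ with $s'=t$ together with $t\cdot b\leq a\wedge b$. Consequently $a=\bigwedge_i q_i$ and $b=\bigwedge_j q'_j$ are finite meets of $S$-irreducible elements.

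The main obstacle is that we only have $s(a\wedge b)\leq m\leq a\wedge b$, not $m=a\wedge b$, so $m$ is not directly the meet. My first attempt is to replace the pair by $a'=a\wedge (m:s)$ and $b'=b\wedge (m:s)$, or to show that $(m:s)$ itself is a finite meet of $S$-irreducibles, since $m\leq (m:s)$ and $a\wedge b\leq (m:s)$. The key point is the identity $m=(m:s)\wedge(m\vee s)$, which is modular-flavoured and would need to be verified, or else a direct step showing $(m:s)$ and $m\vee s$ fall outside $T$ by $S$-maximality. If that fails, I will instead construct $m$ as a meet through the chain $(m:s)\leq (m:s^2)\leq\cdots$, which is $S$-stationary by Theorem~\ref{5.11a.}. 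This yields $k$ and $s''$ with $s''(m:s^{k+1})\leq (m:s^k)$, and then, mimicking the proof of Theorem~\ref{irred}, one gets $m=(m:s^k)\wedge(s^k\vee m)$ up to $S$-multiples, where both factors lie strictly above $m$ in the $S$-sense.

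Once $m$ is expressed as a finite meet of elements outside $T$, expanding each of them into $S$-irreducibles shows that $m$ is itself a finite meet of $S$-irreducible elements. This contradicts $m\in T$, so $T=\varnothing$. Applying Theorem~\ref{irred} to each $S$-irreducible factor then gives the desired finite meet of $S$-primary elements.
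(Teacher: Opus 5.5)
Your set-up is the paper's: the same set $T$, the same $S$-maximal element $m$, and the same argument that $a,b\notin T$. Your restriction to elements lying above no element of $S$ is in fact necessary, since such an element can never be a meet of $S$-primary elements. (For $t\nleq a$ the clean argument is: $tb\leq a\wedge b$, so $stb\leq m$, which contradicts $ss'b\nleq m$ for $s'=t$. You wrote $ss'a$.) You have also found the crux. Only $s(a\wedge b)\leq m\leq a\wedge b$ is known, and the paper's proof goes from ``$a$ and $b$ are finite meets of $S$-irreducible elements'' to ``hence so is $m$'' without addressing this. However, your proposal does not close this step either. The whole inductive step rests on it, and what you offer there is a list of attempts, none of which works.

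Here is why each attempt fails.
\begin{enumerate}
\item Replacing $a,b$ by $a\wedge(m:s)$ and $b\wedge(m:s)$ changes nothing. Since $s(a\wedge b)\leq m$ means $a\wedge b\leq(m:s)$, the new meet is still $a\wedge b$.
\item $S$-maximality tells you nothing about $(m:s)$, because $s(m:s)\leq m$ always holds.
\item The identity $m=(m:s)\wedge(m\vee s)$ is false in general. In $Id(\mathbb{Z})$ with $S=\{(3^n)\mid n\geq 0\}$, $m=(18)$ and $s=(3)$, the right side is $(6)\wedge(3)=(6)\neq(18)$. In a modular lattice with $s$ meet principal, the right side equals $m\vee s(m:s^2)$, which is $m$ only when $(m:s^2)\leq(m:s)$.
\item The chain variant, as you say yourself, gives only $s''((m:s^k)\wedge(m\vee s^k))\leq m$. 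That is the same one-sided relation you started from.
\end{enumerate}
Moreover, in both variants the factor $m\vee s$ (or $m\vee s^k$) lies above an element of $S$. It is outside your $T$, but it is not a finite meet of $S$-irreducible elements. So your closing step, ``expand each factor into $S$-irreducibles'', cannot be applied to it.

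What is missing is a device that turns an $S$-approximate meet into an exact one. One route works at least when the elements of $S$ are meet principal, as for $S_L$ in $Id(R)$.
\begin{enumerate}
\item Run the induction only over $S$-saturated elements $x=x_S\neq 1$. There $s(a\wedge b)\leq m\leq a\wedge b$ forces $m=a\wedge b=a_S\wedge b_S$, and your $S$-maximality argument excludes $a_S$ and $b_S$.
\item For a general $a$, use $S$-compactness of $a_S$ to pick $u\in S$ with $ua_S\leq a$.
\item Modularity and meet principality then give $a=a_S\wedge(a\vee u)$.
\item Absorb $a\vee u$ into one $S$-primary component $q_1$ of $a_S$. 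You must check that $q_1\wedge(a\vee u)$, which has the same $S$-saturation as $q_1$, is still $S$-primary in an $S$-Noetherian lattice.
\end{enumerate}
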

\begin{proof}
	Let $T$ be a collection of elements of $L$ that cannot be written as a finite meet of $S$-irreducible elements. Suppose $T\neq \emptyset$. Then by $S$-maximal condition, there exists an $S$-maximal element in $T$, say $m$. Then $m$ is not $S$-irreducible. Hence there exist $a,b \in L$ and $s \in S$ such that $	s(a \wedge b)\leq m \leq a \wedge b$,  and $\text{for all } s' \in S,\quad ss'a \nleq m \text{ and } ss'b \nleq m.$
	
	We claim that $a \notin T$. Suppose, on the contrary, that $a \in T$.  Since $m \leq a$ and $m$ is $S$-maximal in $T$, there exists $t \in S$ such that $ta \leq m$. Then  $(st)a \leq sm \leq m$.  Taking $s' = t$, we get $ss'a \leq m$,  a contradiction.
	
	Therefore $a\notin T$. Similarly $b\notin T$. This proves that $a$ and $b$ can be written as a finite meet of $S$-irreducible elements. Hence, $m$ can be written as a finite meet of $S$-irreducible elements. This gives $m\notin T$, a contradiction to $m\in T$. Therefore $T=\emptyset$. By Theorem \ref{irred}, every proper element of $L$ can be written as a finite meet of $S$-primary elements. 
\end{proof}

\begin{cor}\label{} 
	Let $L$ be an $S$-Noetherian lattice. Then every radical element $a$ of $L$ such that $t\nleq a$ for all $t\in S$ is the meet of finitely many $S$-prime elements.
\end{cor}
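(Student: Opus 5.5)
We apply Theorem \ref{Existence} to the radical element $a$ and then pass to radicals. Since $t\nleq a$ for all $t\in S$, the element $a$ is proper, so Theorem \ref{Existence} gives $S$-primary elements $q_1,\dots,q_n$ with $a=\bigwedge_{i=1}^{n} q_i$. Because $L$ is a $c$-lattice, the radical commutes with finite meets, as already used in Proposition \ref{prop:finite-intersection-S-primary}. Since $a$ is a radical element ($\sqrt a=a$), this gives
$$a=\sqrt a=\sqrt{\textstyle\bigwedge_{i=1}^{n} q_i}=\bigwedge_{i=1}^{n}\sqrt{q_i}.$$
So the whole matter reduces to showing that $p_i:=\sqrt{q_i}$ is an $S$-prime element for each $i$.

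\textbf{Properness of $p_i$.} Suppose $t\le p_i$ for some $t\in S$. Since $t$ is compact, $t^k\le q_i$ for some $k$, and $t^k\in S$. This contradicts the requirement in the definition of an $S$-primary element that no element of $S$ lies below $q_i$. Hence $t\nleq p_i$ for all $t\in S$, and in particular $p_i\neq 1$.

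\textbf{$S$-primeness of $p_i$.} Let $s\in S$ be the element witnessing that $q_i$ is $S$-primary, and take $x,y\in L$ with $xy\le p_i$. I would first argue for compact $x,y$, using that $L$ is a $c$-lattice (the $S$-Noetherian hypothesis includes the $r$-lattice structure). For compact $x,y$, the product $xy$ is compact, so $(xy)^k=x^ky^k\le q_i$ for some $k$. Applying the $S$-primary condition to the pair $(x^k,y^k)$ gives either $sx^k\le q_i$ or $sy^k\le\sqrt{q_i}$.
\begin{itemize}
\item In the first case, $(sx)^{k}\le s x^k\le q_i$, hence $sx\le p_i$.
\item In the second case, $s y^k\le p_i$. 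Raising to a suitable power gives $(sy)^{kN}\le q_i$, hence $sy\le p_i$.
\end{itemize}
So $sx\le p_i$ or $sy\le p_i$, with the same $s$ working uniformly in both cases.

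\textbf{Main obstacle: general $x,y$.} The difficulty is passing from compact $x,y$ to arbitrary $x,y$. This cannot be done pointwise, since different compact pieces of $x$ and $y$ could fall into different cases. The standard fix is a case split. If $sx\nleq p_i$, pick a compact $c\le x$ with $sc\nleq p_i$. Then for every compact $d\le y$ we have $cd\le p_i$, and the compact argument (with $s$ independent of $c,d$) forces $sd\le p_i$. Joining over all compact $d\le y$ gives $sy\le p_i$, using that $s$ distributes over joins in a multiplicative lattice.

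Hence each $p_i$ is $S$-prime, and $a=\bigwedge_{i=1}^{n} p_i$ is a finite meet of $S$-prime elements, as claimed.
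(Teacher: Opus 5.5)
Your proof is correct and follows the paper's route: apply Theorem~\ref{Existence} to write $a=\bigwedge_{i=1}^{n} q_i$ with each $q_i$ $S$-primary, then use that the radical commutes with finite meets in a $c$-lattice to get $a=\sqrt{a}=\bigwedge_{i=1}^{n}\sqrt{q_i}$. The one difference is that the paper simply asserts each $\sqrt{q_i}$ is $S$-prime, even though Theorem~\ref{Existence} only yields $S$-primary (not $S$-$p$-primary) components; you actually prove it (properness via $t^k\in S$, the compact case with a single witness $s$, and the reduction from compact to arbitrary $x,y$), so your argument fills a step the paper leaves unjustified.
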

\begin{proof}
	By Theorem \ref{Existence}, there exists finitely many $S$-primary elements $q_1,q_2,\dots,q_n$ such that $a=q_1\wedge q_2\wedge \dots \wedge q_n$. Also $\sqrt{q_i}=p_i$ is $S$-prime for $i=1,2,\dots, n$. Hence $a=\sqrt{a}=\sqrt{q_1\wedge q_2\wedge \dots \wedge q_n}=\sqrt{q_1}\wedge \sqrt{q_2}\wedge \dots \sqrt{q_n}=p_1\wedge p_2\wedge \dots \wedge p_n$. 
\end{proof}

To prove the uniqueness theorem, we need the following definitions and terminologies.

\begin{definition}
	Let $L$ be a multiplicative lattice with a multiplicatively closed subset $S \subseteq L_*$.  For any element $a \in L$, the \textit{$S$-saturation} of $a$ is $a_S = \bigvee \{ x \in L \mid sx \leq a \text{ for some } s \in S \}$.
	
	Note that in a $c$-lattice $L$, for any $x\in L_{\star}$, $x\leq a_S$ $\iff$ $sx\leq a$ for some $s\in S$.
	
	It is easy to verify that in a $c$-lattice, $a \leq a_S$ and $(a_S)_S = a_S$. Further, if $a \leq b$, then $a_S \leq b_S$.
\end{definition}

\begin{prop}\label{ps-prime}
	Let $L$ be an $c$-lattice and let $S \subseteq L$ be a multiplicatively closed subset. If $p \in L$ is an $S$-prime element, then its $S$-saturation $p_S$ is a prime element of $L$.
\end{prop}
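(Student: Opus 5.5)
We must show that $p_S$ is proper and that $ab\leq p_S$ implies $a\leq p_S$ or $b\leq p_S$. Since $L$ is a $c$-lattice, we will use the compact-element criterion for primeness recorded after the definition of $S$-prime elements. So it suffices to take compact $a,b$ with $ab\leq p_S$. Throughout, fix the element $s_0\in S$ witnessing that $p$ is $S$-prime.

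\textbf{Properness.} Suppose $p_S=1$. Since $1$ is compact in a $c$-lattice, the note after the definition of $S$-saturation (applied with $x=1$) gives some $s\in S$ with $s=s\cdot 1\leq p$. This contradicts $t\nleq p$ for all $t\in S$. Hence $p_S<1$.

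\textbf{Primeness.} Let $a,b\in L_*$ with $ab\leq p_S$.
\begin{enumerate}
\item The product of compact elements is compact in a $c$-lattice, so $ab$ is compact. The same note then yields $t\in S$ with $t\,ab\leq p$.
\item Write this as $(ta)\cdot b\leq p$. Applying the $S$-primeness of $p$ with the fixed $s_0$ gives either $s_0 t a\leq p$ or $s_0 b\leq p$.
\item $S$ is multiplicatively closed, so $s_0t\in S$. In the first case $a\leq p_S$, since $a$ is one of the elements joined in the definition of $p_S$. In the second case $b\leq p_S$ for the same reason.
\end{enumerate}
This shows $p_S$ is prime.

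\textbf{Main point to check.} The only delicate step is the reduction to compact elements. It relies on the paper's stated equivalence of primeness on $L_*$ in a $c$-lattice, and on the fact that the saturation membership criterion is available only for compact $x$. If one prefers to avoid that equivalence, one can argue directly with arbitrary $a,b$. Suppose $a\nleq p_S$ and $b\nleq p_S$. Compact generation gives compact $a'\leq a$ and $b'\leq b$ with $a'\nleq p_S$ and $b'\nleq p_S$. Since $a'b'\leq ab\leq p_S$, the compact case above gives a contradiction.
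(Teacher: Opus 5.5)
Your proof is correct and is essentially the paper's argument: you restrict to compact $a,b$, use the saturation criterion for compact elements to obtain $t\,ab\le p$, and then apply $S$-primeness to $(ta)b\le p$. You also verify two points the paper leaves implicit: that $p_S<1$ (via compactness of $1$), and that $ab$ is compact so the saturation criterion applies to it.
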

\begin{proof}
	Let $a,b\in L_{\star}$ with $ab\leq p_S$. Then $sab\leq p$ for some $s\in S$. Since $p$ is $S$-prime, there exists $s'\in S$ such that $s'sa\leq p$ or $s'b\leq p$. Therefore $a\leq p_S$ or $b\leq p_S$. Thus $p_S$ is a prime element of $L$.
\end{proof}

 \begin{lem}\label{radical=ps} 
	Let $L$ be an $c$-lattice and $S \subseteq L$ be a multiplicatively closed subset. Let $q$ be an $S$-$p$-primary element of $L$ and $x\in L$ such that $sx\nleq q$ for all $s\in S$. Then $(q:sx)$ is $S$-primary element of $L$, and there exists $t\in S$ such that $t(\sqrt{(q:sx)})\leq p\leq \sqrt{(q:sx)}$ for all $s\in S$. In particular, if $sx \not\leq q$ for all $s \in S$ and $x\in L$, then
	$(\sqrt{q : x})_S = p_S$.
\end{lem}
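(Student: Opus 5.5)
Fix $q$ an $S$-$p$-primary element with witness $u\in S$: $cd\le q$ implies $uc\le q$ or $ud\le\sqrt q=p$. Fix $x$ with $sx\nleq q$ for all $s\in S$, and $s\in S$. The plan has three parts. First, show $(q:sx)$ is $S$-primary. Second, compare its radical with $p$ via a witness $t$ independent of $s$. Third, saturate to get the final equality.

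\emph{Properness and $S$-avoidance.} If $t'\le(q:sx)$ for some $t'\in S$, then $t'sx\le q$. Since $t's\in S$, this contradicts the hypothesis on $x$. So $t'\nleq(q:sx)$ for all $t'\in S$, and in particular $(q:sx)\neq 1$.

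\emph{Upper bound $p\le\sqrt{(q:sx)}$.} We have $q\le(q:sx)$, and the radical is monotone in a $c$-lattice, so $p=\sqrt q\le\sqrt{(q:sx)}$.

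\emph{Lower bound with $t=u^2$.} Let $y$ be compact with $y^n\le(q:sx)$. Then $y^n(sx)\le q$. Applying the $S$-primary condition gives $u\,sx\le q$ or $u y^n\le p$. The first is impossible, so $uy^n\le p$, hence $(uy)^n\le uy^n\le p$. Now use that $p$ is $S$-prime; let $w\in S$ be its witness. Iterating the $S$-prime condition $n-1$ times would produce powers of $w$, which depend on $n$. I would avoid this by noting that $p=\sqrt q$ is a radical element. Then $(uy)^n\le p$ gives $uy\le\sqrt p=p$, because $uy$ is compact, being a product of compacts in a $c$-lattice. Thus $uy\le p$ for every such $y$. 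Since $\sqrt{(q:sx)}$ is a join of such compact $y$ and multiplication distributes over joins, $u\sqrt{(q:sx)}\le p$. So $t=u$ already works, uniformly in $s$.

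\emph{Primary condition for $(q:sx)$.} Suppose $cd\le(q:sx)$, i.e.\ $c(dsx)\le q$. The primary condition gives $uc\le q\le(q:sx)$, or $udsx\le p$. In the second case, $d(usx)\le p$, and the $S$-prime condition gives $wd\le p$ or $wusx\le p$. In the first subcase, $wd\le p\le\sqrt{(q:sx)}$, so $uw$ serves as a witness. The second subcase is the main obstacle: $wusx\le p$ is compatible with $sx\nleq q$, so it does not by itself yield a contradiction.

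I would handle this by working directly with $q$ rather than with the radical. From $c\,d\,sx\le q$, apply the primary condition to the factorization $(c\,sx)\cdot d\le q$. This gives $u\,c\,sx\le q$, i.e.\ $uc\le(q:sx)$, or $ud\le p\le\sqrt{(q:sx)}$. Hence $u$ is itself a witness for $(q:sx)$ being $S$-primary. This regrouping is the key trick and removes the obstacle.

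\emph{Saturation.} For the final claim, take $s=1$, so $(q:x)$ is defined and $u\sqrt{(q:x)}\le p\le\sqrt{(q:x)}$. Taking $S$-saturations and using monotonicity gives $p_S\le(\sqrt{(q:x)})_S$. In the other direction, any compact $z\le\sqrt{(q:x)}$ satisfies $uz\le p$, so $z\le p_S$. Since $\sqrt{(q:x)}$ is compactly generated, $\sqrt{(q:x)}\le p_S$, and therefore $(\sqrt{(q:x)})_S\le(p_S)_S=p_S$.
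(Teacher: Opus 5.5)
Your proof is correct and is essentially the paper's argument. The regrouping $(c\,sx)\cdot d\le q$ shows that the primary witness $u$ of $q$ also witnesses that $(q:sx)$ is $S$-primary, and applying $q$'s primary condition to $y^n\cdot sx\le q$ gives the uniform bound $u\sqrt{(q:sx)}\le p$. There are only two differences from the paper: you invoke $\sqrt{\sqrt q}=\sqrt q$ where the paper computes $(tw)^{mn}\le (tw^n)^m\le q$, and you write out the saturation step that the paper merely asserts (the heading ``$t=u^2$'' is a leftover, since your argument gives $t=u$, as you conclude).
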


\begin{proof} 
	First we prove that $(q:sx)$ is $S$-primary element for all $s\in S$. Let $s\in S$. If $s^{\prime}\leq (q:sx)$ for some $s^{\prime}\in S$, then $s^{\prime}sx\leq q$, a contradiction to $sx\nleq q$ for all $s\in S$. Therefore $s^{\prime}\nleq (q:sx)$ for all $s^{\prime}\in S$. 
	
	Let $a, ~b \in L$ such that  $ab\leq (q:sx)$ with $s^{\prime}a\nleq \sqrt{(q:sx)}$ for all $s^{\prime}\in S$. Then $(sxb)a\leq q$. Since $q$ is \linebreak $S$-primary element, there exists $s^{\prime\prime}\in S$ such that $s^{\prime\prime}sxb\leq q$ or $s^{\prime\prime}a\leq \sqrt{q}$. If $s^{\prime\prime}a\leq \sqrt{q}$, then $s^{\prime\prime}a\leq \sqrt{(q:sx)}$, a contradiction. Therefore $sxs^{\prime\prime}b \leq q$. Hence $s^{\prime\prime}b\leq (q:sx)$. Thus $(q:sx)$ is $S$-primary element of $L$. 
	
	Now, let $w$ be any compact element of $ L$ such that  $w\leq \sqrt{(q:sx)}$ for all $s \in S$. This implies $w^n\leq (q:sx)$ for some positive integer $n$. This implies $w^nsx\leq q$. Since $q$ is $S$-primary element and $sx\nleq q$ for all $s\in S$, there exists $t\in S$ such that $tw^n\leq \sqrt{q}$. Hence $(tw^n)^m\leq q$ for some positive integer $m$. This gives $(tw)^{mn}\leq (tw^n)^m\leq q$. Therefore $tw\leq \sqrt{q}$, i.e.,  $w \leq (\sqrt{q} : t)$. Since each compact element $\leq \sqrt{(q:sx)} $ is also $\leq (\sqrt{q} : t) $.  This gives $ \sqrt{(q:sx)} \leq (\sqrt{q} : t) $, i.e., $t(\sqrt{(q:sx)})\leq \sqrt{q}=p$. Also $q\leq (q:sx)$ implies $p=\sqrt{q}\leq \sqrt{(q:sx)}$. Thus, $t(\sqrt{(q:sx)})\leq p\leq \sqrt{(q:sx)}$ for all $s\in S$.	
	
	In particular, for $s=1$, $t(\sqrt{(q:x)})\leq p\leq \sqrt{(q:x)}$ for some $t\in S$. This implies $(\sqrt{q : x})_S = p_S$.
\end{proof}

\begin{thm}[First $S$-Uniqueness Theorem]
	Let $L$ be an $c$-lattice and let $S \subseteq L$ be a multiplicatively closed subset.  Let $a \in L$ be a minimal $S$-primary decomposition $ a=\bigwedge_{i=1}^{n} q_i,$
	where each $q_i$ is an \linebreak $S$-$p_i$-primary element of $L$ and $p_i = \sqrt{q_i}$. Then the set of $S$-saturated prime elements \linebreak $\{ (p_1)_S, (p_2)_S, \dots, (p_n)_S \}$ is uniquely determined by $a$ and is precisely the set \linebreak $	\mathbb{P} = \left\{\, (\sqrt{(a:x)})_S \;\middle|\; x \in L,\ sx \nleq a \ \text{for all } s \in S,\ \text{and } (\sqrt{(a:x)})_S \text{ is prime} \right\}$.
	
	In particular, the set $\{ (p_i)_S \}$ is independent of the chosen minimal $S$-primary decomposition of $a$.
\end{thm}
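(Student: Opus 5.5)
We prove two inclusions: every $(p_i)_S$ lies in $\mathbb{P}$, and every member of $\mathbb{P}$ equals some $(p_i)_S$. Independence from the chosen decomposition then follows, since $\mathbb{P}$ is defined purely from $a$. Throughout we use Proposition~\ref{ps-prime}, so each $(p_i)_S$ is prime, and Lemma~\ref{radical=ps}.

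\emph{Step 1: each $(p_i)_S\in\mathbb{P}$.} Fix $i$. By the second minimality condition there is $x_i\leq\bigwedge_{j\neq i}q_j$ with $s x_i\nleq q_i$ for all $s\in S$. Since $(a:x_i)=\bigwedge_j (q_j:x_i)$ and $(q_j:x_i)=1$ for $j\neq i$, we get $(a:x_i)=(q_i:x_i)$. In particular $sx_i\nleq a$ for all $s\in S$, since otherwise $sx_i\leq a\leq q_i$.

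Lemma~\ref{radical=ps} applied to $q_i$ and $x_i$ gives $(\sqrt{(a:x_i)})_S=(\sqrt{(q_i:x_i)})_S=(p_i)_S$. This element is prime by Proposition~\ref{ps-prime}, so $(p_i)_S\in\mathbb{P}$.

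\emph{Step 2: every member of $\mathbb{P}$ is some $(p_i)_S$.} Take $x$ with $sx\nleq a$ for all $s\in S$ and $P:=(\sqrt{(a:x)})_S$ prime. In a $c$-lattice the radical commutes with finite meets, so
\[
\sqrt{(a:x)}=\bigwedge_{i}\sqrt{(q_i:x)}.
\]
Let $I=\{i : sx\nleq q_i \text{ for all } s\in S\}$. For $i\notin I$ there is $s_i$ with $s_ix\leq q_i$. Then $s_i\leq (q_i:x)$, hence $(\sqrt{(q_i:x)})_S=1$.

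For $i\in I$, Lemma~\ref{radical=ps} gives $t_i(\sqrt{(q_i:x)})\leq p_i\leq\sqrt{(q_i:x)}$. We now show that $S$-saturation commutes with finite meets, i.e. $(b\wedge c)_S=b_S\wedge c_S$. The inclusion $\leq$ is monotonicity. For $\geq$, take a compact $w\leq b_S\wedge c_S$; then $s_1w\leq b$ and $s_2w\leq c$, so $s_1s_2w\leq b\wedge c$, and since $L$ is compactly generated this suffices. Hence
\[
P=\bigwedge_{i\in I}(p_i)_S .
\]
The set $I$ is nonempty, since otherwise $P=1$, which is not prime. A prime element that equals a finite meet of elements contains their product, so it must be $\geq$ some $(p_k)_S$ with $k\in I$; together with $P\leq (p_k)_S$ this gives $P=(p_k)_S$.

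\emph{Main obstacle.} The delicate step is the computation $(\sqrt{(q_i:x)})_S=(p_i)_S$ for $i\in I$. It relies on Lemma~\ref{radical=ps} being valid when $s$ ranges over all of $S$, and on the hypothesis that $p_i$ is $S$-prime, so that $(p_i)_S$ is prime. The identities $\sqrt{b\wedge c}=\sqrt b\wedge\sqrt c$ and $(b\wedge c)_S=b_S\wedge c_S$ need compactness of products, which is why the $c$-lattice hypothesis is essential; we would verify both carefully on compact elements. Finally, the first minimality condition (distinct $(p_i)_S$) is used only to state that the set has exactly $n$ elements, not for the set equality itself.
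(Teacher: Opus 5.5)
Your proposal is correct and follows the paper's proof essentially step for step: the same witness $x\le\bigwedge_{i\ne j}q_i$ with $(a:x)=(q_j:x)$ together with Lemma~\ref{radical=ps} for one inclusion, and the identity $(\sqrt{(a:x)})_S=\bigwedge_i(\sqrt{(q_i:x)})_S$ plus primeness for the other. The only differences are cosmetic. You evaluate each term as $1$ or $(p_i)_S$ before invoking primeness, whereas the paper first picks $j$ and then rules out $sx\le q_j$. You also explicitly justify $(b\wedge c)_S=b_S\wedge c_S$ and the primeness of $(p_i)_S$ via Proposition~\ref{ps-prime}, both of which the paper uses without comment.
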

\begin{proof}
	We prove that the set of $S$-saturated primes $\{(p_1)_S,\dots,(p_n)_S\}$ coincides with $\mathbb{P}$.
	
	Fix $j \in \{1,\dots,n\}$. By minimality of the decomposition, $\bigwedge_{i \ne j} q_i \nleq (q_j)_S$. Hence there exists $x \in L$ such that $x \le \bigwedge_{i \ne j} q_i$ and $sx \nleq q_j$ for all $s \in S$. Since $a = \bigwedge_{i=1}^n q_i \le q_j$, it follows that $sx \nleq a$ for all $s \in S$.
	
	It is easy to observe that $(a:x) = \bigwedge_{i=1}^n (q_i:x)$. For $i \ne j$,  and as $x \le q_i$ implies $(q_i:x)=1$, hence $(a:x) = (q_j:x)$ and so we have $\sqrt{(a:x)} = \sqrt{(q_j:x)}$.
	
Since $sx \nleq q_j$ and $q_j$ is $S$-$p_j$-primary, we have, by Lemma \ref{radical=ps}, $(\sqrt{(q_j:x)})_S = (p_j)_S$. 
	
	Thus  $(\sqrt{(a:x)})_S = (p_j)_S$, showing $(p_j)_S \in \mathbb{P}$.
	
	Now, we prove the converse.
	
	Since $(a : x) = \left( \bigwedge_{i=1}^n q_i : x \right) = \bigwedge_{i=1}^n (q_i : x)$, we have $\sqrt{\bigwedge_{i=1}^n (q_i : x)} = \bigwedge_{i=1}^n \sqrt{(q_i : x)}$. This further gives $(\sqrt{(a:x)})_S =\left( \sqrt{\bigwedge_{i=1}^n (q_i : x)} \right)_S = \left( \bigwedge_{i=1}^n \sqrt{q_i : x} \right)_S= \bigwedge_{i=1}^n (\sqrt{q_i : x})_S $   as $a_S \wedge b_S = (a \wedge b)_S$. Hence $(\sqrt{(a:x)})_S \leq (\sqrt{q_i : x})_S$ for all $i$.
	
	Let $\pi = (\sqrt{(a:x)})_S \in \mathbb{P}$, where $sx \nleq a=\bigwedge_{i=1}^n q_i$ for all $s\in S$. 
	
	Since $\pi$ is prime and  $ \bigwedge_{i=1}^n (\sqrt{q_i : x})_S =(\sqrt{(a:x)})_S =\pi$,  there exists  some $j$ such that $(\sqrt{q_j : x})_S \leq (\sqrt{(a:x)})_S  $. This together with $(\sqrt{(a:x)})_S \leq (\sqrt{q_i : x})_S$ for all $i$ gives that $(\sqrt{(a:x)})_S = (\sqrt{q_j: x})_S$. 
	
	 If $sx \le q_j$ for some $s \in S$, then $s \le (q_j:x)$ and hence $(\sqrt{(q_j:x)})_S = 1$, contradicting the fact that $\pi$ is a proper prime element. Therefore $sx \nleq q_j$ for all $s \in S$, and by Lemma \ref{radical=ps}, $(\sqrt{(a:x)})_S =(\sqrt{(q_j:x)})_S = (p_j)_S$.\end{proof}


\nocite{*}
\begin{thebibliography}{99}
\bibitem{HH} H. Ahmed and H. Sana, {\it Modules Satisfying the $S$-Noetherian property and $S$-ACCR}, Comm.  Algebra, {\bf 44 (5)} (2016), 1941-1951.
		
		\bibitem{AAJ} F. Alarcon, D. D. Anderson and C. Jayaram, {\it Some results on abstract commutative ideal theory}, Period. Math. Hung., {\bf 30} (1995), 1-26. 
		
		
		\bibitem{A} D. D. Anderson, {\it Abstract commutative ideal theory without chain condition}, Algebra Universal., {\bf 6} (1976), 131-145. 
		
		\bibitem{AT} D. D. Anderson and T. Dumitrescu, {\it $S$-Noetherian Rings}, Comm. Algebra, {\bf 30 (9)} (2002), 4407-4416.
		
		
		
		
		
		
		
		
		
		
		
		
		
		
		
		\bibitem{D} R. P. Dilworth, {\it Abstract commutative ideal theory}, Pacific J. Math., {\bf 12} (1962), 481-498.
		
		
		\bibitem{HM} A. Hamed and A. Malek, {\it $S$-prime ideals of a commutative ring}, Beitr. Algebra Geom., {\bf 61} (2020),533-542.
		
		\bibitem{JS} V.  Joshi and S. Sarode, {\it Beck's conjecture and multiplicative
			lattices}, Discrete Math., {\bf 338 (3)} (2015), 93-98.
		
		
		
		
		
		
		
		\bibitem{JG} C. Jayaram, {\it Laskerian Lattices}, Czech. Math. J., {\bf 53 (128)} (2003), 351-363.
		
		
		
		 
		 \bibitem{kap} I. Kaplansky,  \textit{Commutative Rings}, The University of Chicago Press: Chicago and London, 1974.
		 
		
		
		
		
		
		\bibitem{M} E. Massaoud, {\it $S$-primary ideals of a commutative ring}, Comm. Algebra, {\bf 50 (3)} (2021), 988–997. 
		
		
		
		\bibitem{SJ} S. Sarode and V. Joshi, {\it $\mathfrak {X} $-elements in multiplicative lattices-A generalization of $ J $-ideals, $ n $-ideals and $ r $-ideals in rings}, Int. Electron. J. Algebra, {\bf 32 (32)} (2022), 46-61.
		
		\bibitem{SJP} S. Sarode, V. Joshi and C. Patil, {\it On $S$-prime element principle} (	arXiv:2604.20820 ).
		
		\bibitem{SAK} T. Singh, A. Ansari and S. Kumar, {\it A study of $S$-primary decompositions}, Czech. Math. J., {\bf 75 (125)} (2025), 1241–1253.
		
		
		\bibitem{V} S. Visweswaran, {\it Some results on $S$-primary ideals of a commutative ring}, Beitr. Algebra Geom. {\bf 63} (2022), 247-266.
		
		
		
		\bibitem{WD} M. Ward and R. P. Dilworth, {\it Residuated lattices}, Trans. Amer. Math. Soc.,  45 (1939), 335-354.	
\end{thebibliography}
\end{document}